\providecommand{\B}{\mathcal{B}}
\newtheorem{question}{Question}
\newtheorem{thm}{Theorem}[section]
\newtheorem{definition}[thm]{Definition}
\newtheorem{remark}[thm]{Remark}
\newtheorem{lemma}[thm]{Lemma}
\newtheorem{prop}[thm]{Proposition}
\newtheorem{claim}{Claim}[section]
\newenvironment{defn}{\begin{definition} \rm}{ \end{definition}}
\newcommand{\skipmm}[1]{\hspace{-#1mm}}
\DeclareMathOperator{\A}{\mathcal{A}}
\renewcommand{\phi}{\varphi}
\DeclareMathOperator{\Inf}{\mathbf{Inf}}
\newcommand{\Ex}{\mathbf{Ex}}
\newcommand{\rel}[1]{\mathrel{#1}}
\title[Learning  structures and Borel equivalence relations]{Learning algebraic  structures\\ with the help of Borel equivalence relations}
\date{\today}
\thanks{\emph{Acknowledgements.} Bazhenov was supported by Mathematical Center in Akademgorodok under agreement No. 075-15-2019-1613 with the Ministry of Science and Higher Education of the Russian Federation. Cipriani's research was partially supported by the Italian PRIN 2017 Grant ``Mathematical Logic: models, sets, computability''.}
\author[N.~Bazhenov]{Nikolay Bazhenov}
\address{Sobolev Institute of Mathematics\\ 4 Acad. Koptyug Ave., Novosibirsk,
630090, Russia}
\email{{bazhenov@math.nsc.ru}}
\author[V.~Cipriani]{Vittorio Cipriani}
\address{Department of Mathematics, Computer Science and Physics, University of Udine, Italy}
\email{{cipriani.vittorio@spes.uniud.it}}
\author[L.~San Mauro]{Luca San Mauro}
\address{Department of Mathematics, Sapienza University of Rome, Italy}
\email{{luca.sanmauro@uniroma1.it}}
\keywords{Inductive inference, Algorithmic learning theory,
Computable structures, Borel equivalence relations, Continuous reducibility}
\subjclass[2010]{68Q32, 03E15}
\begin{document}

\begin{abstract}
We study algorithmic learning of algebraic structures. In our framework, a
learner receives larger and larger pieces of an arbitrary copy of a computable
structure and, at each stage, is required to output a conjecture about the isomorphism type of such a structure. The learning is successful if the conjectures
eventually stabilize to a correct guess. We prove that a family of structures is
learnable if and only if its learning domain is continuously reducible to
the relation $E_0$ of eventual agreement on reals. This motivates a novel research program, that is, using
descriptive set theoretic tools to calibrate  the
(learning) complexity of nonlearnable families. Here, we focus on the learning power of well-known
benchmark Borel equivalence relations (i.e., $E_1$, $E_2$, $E_3$, $Z_0$, and $E_{set}$).
\end{abstract}

\maketitle

\section{Introduction}

This paper wishes to connect two seemingly distant areas of research: algorithmic learning theory and the theory of Borel equivalence relations.

\smallskip

Algorithmic learning theory dates back to the work of Gold~\cite{Gold67} and Putnam~\cite{putnam1965trial} in the 1960s and it comprehends several formal frameworks for the inductive inference. Broadly construed, this research program models the ways in which a \emph{learner} can achieve systematic
knowledge about a given \emph{environment}, by accessing to more and more data about it. Although in classical paradigms the objects to be inferred are either formal languages or recursive functions (see, e.g., \cite{zz-tcs-08,lange2008learning}), in  recent times there has been a growing interest in 
the learning of data embodied with a structural content,
with special attention  paid to familiar classes of algebraic structures, such as vector spaces, rings, trees, and matroids \cite{HaSt07,SV01,MS04,GaoStephan-12}. 

In previous works~\cite{FKS-19,bazhenov2020learning}, relying on ideas and technology from computable structure theory, we introduced and explored our own framework. Intuitively (formal details will be given below), an agent receives larger and larger pieces of an arbitrary copy of a computable structure and, at each stage, is required to output a conjecture about the isomorphism type of such a structure. Then, the learning is successful if the conjectures eventually stabilize to a correct guess. 
See also \cite{Gly:j:85,Mar-Osh:b:98} for other frameworks which can be similarly applied to arbitrary structures.

As single countable structures can always be learned, the emphasis of our research is on the learnability (or lack thereof) of families of structures. 
 In \cite{bazhenov2020learning}, by adopting infinitary logic, we obtained a complete model theoretic characterization of which families of algebraic structures are learnable. From such a characterization, it immediately follows  that some seemingly innocent learning problems are out of reach: e.g., no agent can  learn whether the observed structure is a copy of the isomorphism type of the natural numbers or of the integers, that is, the pair of linear orders $\{\omega, \zeta\}$ is nonlearnable. We have also addressed the question of how much computational power is needed to handle a given learning problem: in \cite{bazhenov2021turing}, we constructed a pair of structures which is learnable, but no Turing machine can learn it.

\smallskip

A defect of our framework has been that, until this day, we had no way of calibrating the complexity of nonlearnable families. The present paper aims at rectifying this situation, by offering a new hierarchy to classify the complexity of learning problems for algebraic structures. To this end, we borrow several ideas from descriptive set theory. This is readily justified. Indeed, a primary theme of modern descriptive set theory is the study of the complexity of equivalence relations defined on suitable topological spaces, with a special focus on the so-called \emph{Borel equivalence relations}, to be defined below (see, e.g., \cite{gao2008invariant,kanoveui,hjorth2010borel}). A popular way of evaluating the complexity of Borel equivalence relations is by defining an appropriate reducibility: in general, a reduction from an
equivalence relation $E$ on $X$ to an equivalence relation $F$ on $Y$ is a (nice)
function $f : X \to Y$ which induces an embedding on the equivalence
classes, $X_{/E} \to Y_{/F}$.

A large body of literature, within the theory of Borel equivalence relations, concerns  equivalence relations associated to \emph{classification problems}, i.e., problems which ask to scaffold a given family of mathematical structures up to certain notion of ``similarity''. Crucially to our interests, isomorphism problems form an important subclass of classification problems, and descriptive set theorists have put serious effort in ranking the complexity of isomorphism problems for various familiar classes of countable structures (such as groups, trees, linear orderings, and Boolean algebras \cite{FriedmanStanley,mekler1981stability,
camerlo2001completeness}).

The above description, albeit necessarily brief and oversimplified, may resound with our learning framework. Indeed, in our paradigm the learner is required to guess the isomorphism type for each structure from the family to be learned. Hence, the nonlearnability of a certain family $\mathfrak{K}$ of algebraic structures is, in a sense,  rooted in the complexity of the isomorphism relation associated with $\mathfrak{K}$. Yet, two  aspects shall be  stressed:

\begin{enumerate}
\item The isomorphism relations customarily studied in descriptive set theory refer to \emph{large} collections of countable structures (e.g., \emph{all} graphs, abe\-lian groups, or metric spaces). On the contrary, here we focus on learning \emph{small} families (i.e., countable families, and in fact often finite ones as in \cite{bazhenov2021turing}); 
\item At any finite stage, the learner sees 
only a finite fragment of the structure to be learned, and each conjecture must  be formulated without knowing how the observed structure will be extended. In topological terms, this coincides with asking that the learning must be a \emph{continuous} process. 
\end{enumerate}

These observations are clearly informal. But, in Section~3, we'll be able to make them precise, while offering a new characterization of learnability, this time from a descriptive set theoretic point of view. Namely, we'll show that \emph{a family of structures $\mathfrak{K}$ is learnable if and only the isomorphism relation  associated with $\mathfrak{K}$ is continuously reducible to the relation $E_0$ of eventual agreement on reals} (Theorem~\ref{from E_0 to InfEx}). As the relation $E_0$ is a fundamental benchmark in the theory of Borel equivalence relations (e.g., the celebrated Glimm-Effros dichotomy states that $E_0$ is the successor of the identity on reals within the Borel hierarchy~\cite{HKL}), such a new characterization of learnability for structures may serve as a piece of evidence  that our paradigm is a natural one. 

Furthermore, by replacing $E_0$ with Borel equivalence relations of higher complexity, one  immediately unlocks  the promised  hierarchy of learning problems. That is, we'll say that a family of structures $\mathfrak{K}$ is \emph{$E$-learnable}, for a Borel equivalence relation $E$, if there is a continuous reduction from the isomorphism relation  associated with $\mathfrak{K}$ to $E$.  Then, Sections~4--6 are dedicated to an investigation of the learning power of several benchmark Borel equivalence relations, offering both examples of relations which do not enlarge the scope of $E_0$-learnability (Theorems~\ref{prop:E_01} and \ref{thm:E_02}) and  equivalence relations which do so (Theorem~\ref{thm:E_03_for_countable} and \ref{thm:E_3-characterization}). Interestingly, we'll show that the learning power of some equivalence relations is affected by whether we restrict the attention to families containing only finitely many isomorphism types, or we rather allow countably infinite families. The final section contains a brief description of our intended future research in this area.

\section{Preliminaries} 
As this paper is at the crossroad of a number of areas -- namely, computable structure theory, algorithmic learning theory, and descriptive set theory -- it will be convenient to break down these preliminaries in multiples subsections. We assume that the reader has some basic knowledge of topology and computability, as it can be found in \cite{soare2016turing}. In particular, by $(\phi_e)_{e\in\omega}$, $(W_e)_{e\in\omega}$, and $(\Phi^X_e)_{e\in\omega}$ we denote a uniformly
computable list of, respectively, all partial computable functions, all computably
enumerable (c.e.) sets, and all Turing operators with oracle $X$. 

\subsection*{Reals}
We adopt the common habit of calling  infinite binary sequences \emph{reals}. To distinguish them from the natural numbers, reals are denoted by lowercase Greek letters (e.g., $\alpha,\beta$). Functions on reals are denoted by uppercase Greek letters (e.g., $\Gamma,\Psi$). The $m$-th binary digit of a real $\alpha$ is denoted by $\alpha(m)$. By $\alpha^{[m]}$, we denote the real representing the $m$-th column $\alpha(\langle m,\cdot\rangle)$ of $\alpha$. The \emph{symmetric difference} $\alpha\triangle \beta$ of two reals is defined in the usual way:
\[
	(\alpha \triangle \beta)(i) = 1\ \Leftrightarrow\ \alpha(i) \neq \beta(i).
\]

\subsection*{The Cantor space}

In this paper, we focus on equivalence relations defined on the \emph{Cantor space}. Such a space, written as $2^\omega$, can be represented as  the collection of reals, equipped with the product topology of the discrete topology on $\{0,1\}$. 
For a binary string $\sigma$, the \emph{cylinder} $[\sigma]$ is defined as the collection of reals extending $\sigma$, i.e.,
\[
[\sigma]:=\{ \alpha\in 2^\omega : \sigma\subseteq \alpha \}.
\]
 These cylinders form a basis  of $2^\omega$. A subset $X$ of Cantor space is \emph{Borel}, if it  can be constructed by open sets, taking countable unions, countable intersections, and complements. 
 A function $\Gamma : 2^\omega \to 2^\omega$  is \emph{Borel}, if the preimage of any Borel set is Borel; it is \emph{continuous}, if the preimage $\Gamma$ of any open set is open. A Turing operator $\Phi$ can be naturally regarded as a partial function $\Phi: 2^\omega\to 2^\omega$,  where $\Phi(\alpha)$ is defined if and only $\Phi^\alpha$ is total. Note that  this partial function is continuous, since  converging oracle computations are always determined by a  finite initial segment of the oracle. For a function $\Gamma : 2^\omega \to 2^\omega$, a real $\alpha$, and a number $s\in\omega$, the notation $\Gamma(\alpha)(s)$ refers to the $s$th bit of $\Gamma(\alpha)$.

  Throughout the paper, we will often  rely on the following lemma which expresses that every continuous function is computable with respect to some powerful enough oracle.
 
 \begin{lemma}[folklore]\label{continuous is computable with oracle}
 If $\Gamma: 2^\omega \to 2^\omega$ is continuous, then there are an oracle $X$ and Turing operator $\Phi$ so that
 \[
 \Gamma(\alpha)=\Phi^{X\oplus \alpha},
 \]
 for every real $\alpha$.
 \end{lemma}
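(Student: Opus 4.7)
The plan is to encode the continuous function $\Gamma$ as an oracle that records, for each finite binary input, the longest output prefix that is already forced. Specifically, I would define a function $f: 2^{<\omega} \to 2^{<\omega}$ by letting $f(\sigma)$ be the longest string $\tau$ such that $\Gamma(\alpha) \supseteq \tau$ for every real $\alpha \supseteq \sigma$. This is well-defined: the empty string trivially qualifies, and any two candidate $\tau$'s must be comparable (otherwise the cylinder $[\sigma]$ would contain reals mapped to incompatible strings by $\Gamma$, contradicting that $\Gamma(\alpha)$ is a single real). Taking the union of all qualifying $\tau$'s still yields a qualifying string by a simple compactness observation on finite prefixes.

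Next, I would record two key properties of $f$. First, \emph{monotonicity}: if $\sigma \subseteq \sigma'$, then every $\alpha \supseteq \sigma'$ also extends $\sigma$, so $f(\sigma) \subseteq f(\sigma')$. Second, \emph{cofinality along each branch}: for every $\alpha \in 2^\omega$ and every $n \in \omega$, there exists $m$ such that $|f(\alpha\upharpoonright m)| \geq n$. This is exactly where continuity enters: the preimage $\Gamma^{-1}([\Gamma(\alpha) \upharpoonright n])$ is an open set containing $\alpha$, hence contains some cylinder $[\alpha \upharpoonright m]$, which forces $f(\alpha \upharpoonright m) \supseteq \Gamma(\alpha) \upharpoonright n$.

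Now I would code $f$ as an oracle, e.g.\ by setting
\[
X = \{ \langle \sigma, \tau \rangle : \tau \subseteq f(\sigma) \},
\]
so that $f$ is uniformly computable from $X$. I would then define the Turing operator $\Phi$ as follows: on oracle $X \oplus \alpha$, to output the $s$-th bit, search (using $\alpha$ as oracle) for the least $m$ with $|f(\alpha\upharpoonright m)| > s$ (computing $f(\alpha\upharpoonright m)$ via $X$), and output $f(\alpha\upharpoonright m)(s)$. The cofinality property guarantees that the search terminates for every $\alpha$ and every $s$, so $\Phi^{X \oplus \alpha}$ is total; the definition of $f$ guarantees that the bit produced really is $\Gamma(\alpha)(s)$, and monotonicity makes the answer independent of $m$.

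The construction is essentially routine, so the only point requiring any care is the justification that $f(\sigma)$ is genuinely a \emph{longest} string (so that $f$ is a function into $2^{<\omega}$ rather than into $2^{\leq \omega}$). If $\Gamma$ happens to be constant on $[\sigma]$, then the candidate $\tau$'s have no finite maximum, and we would instead place $\Gamma$'s common value directly into $X$ for that $\sigma$; equivalently, one can redefine $f(\sigma)$ as the longest $\tau$ of length at most $|\sigma|$ satisfying the condition, which keeps the cofinality property intact while bounding the output length. Aside from this mild bookkeeping, no obstacle arises.
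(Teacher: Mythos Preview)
Your proposal is correct and follows essentially the same approach as the paper: both arguments produce a monotone map $h\colon 2^{<\omega}\to 2^{<\omega}$ with $\Gamma(\alpha)=\bigcup_{\sigma\subset\alpha}h(\sigma)$ and then take any oracle $X$ computing $h$. The paper simply asserts the existence of such an $h$ in one sentence, whereas you explicitly construct it (and even handle the edge case where $\Gamma$ is constant on a cylinder), so your version is a strictly more detailed rendering of the same idea.
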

 
 \begin{proof}
 The continuity of $\Gamma$ guarantees that there is a  function $h: 2^{<\omega}\to 2^{<\omega}$ which satisfies the following requirements:
 	\begin{enumerate}
 	\item for $\sigma, \tau \in 2^{<\omega}$, if $\sigma \subseteq \tau$, then $h(\sigma)\subseteq h(\tau)$;
 	\item for all $\alpha \in 2^\omega$, $\Gamma(\alpha)=\bigcup_{\sigma \subset \alpha} h(\sigma)$.
 	\end{enumerate}
Hence, it is straightforward to define the desired Turing operator by choosing an oracle $X$ that computes $h$.
 \end{proof}
 
 In this paper, it is convenient to call every Turing operator of the form $\Phi^{X\oplus\alpha}$ a  \emph{Turing $X$-operator}. Intuitively,  Turing $X$-operators can be identified with a Turing machine which has three tapes: the input tape (on which the machine is allowed to work), the
output tape, and the oracle tape, where the oracle tape always contains the characteristic function of $X$. 

\subsection*{Benchmark Borel equivalence relations} To evaluate the complexity of equivalence relations on reals, one defines a suitable reducibility. Let $E$ and $F$ be equivalence relations on $2^\omega$. A \emph{reduction} from $E$ to $F$ is a function $\Gamma: 2^\omega \to 2^\omega$ such that
\[
\alpha \rel{E} \beta \Leftrightarrow {\Gamma(\alpha) \rel{F} \Gamma(\beta)},
\]
for all reals $\alpha,\beta$. It is common to impose definability requirements on the functions inducing a reduction. \emph{Borel reductions}, introduced in \cite{FriedmanStanley}, are regarded as the most useful tools for calculating the relative complexity of equivalence relations. But in this paper, we'll concentrate on the following stronger reducibility: $E$ is \emph{continuously reducible} to $F$, if  there is a continuous function $\Gamma: 2^\omega \to 2^\omega$ which reduces $E$ to $F$.

The following combinatorial equivalence relations on reals are widely considered in descriptive set theory as benchmarks to gauge the complexity of natural classification problems (see, e.g., \cite{kanoveui}):

\begin{itemize}
\smallskip

\item[(a)] $\alpha \rel{E_0} \beta$ if and only if
\[
(\exists m)(\forall n\geq m)(\alpha (n)=\beta(n)).
\]
	\item[(b)] $\alpha\rel{E_1}\beta$ if and only if
	\[
		(\forall^{\infty} m\in\omega) (\alpha^{[m]} = \beta^{[m]}).
	\]
	
	\item[(c)] $\alpha \ E_2\ \beta$ if and only if
	\[
		\sum_{k=0}^{\infty} \frac{(\alpha\triangle\beta)(k)}{k+1}\ <\ \infty.
	\]
	
	\item[(d)] $\alpha \ E_3\ \beta$ if and only if 
\[	
	(\forall m )( \alpha^{[m]} \ E_0 \ \beta^{[m]}).
\]
	
	\item[(e)] $\alpha\ E_{set}\ \beta$ if and only if 
\[	
	\{ \alpha^{[m]}\,\colon m\in\omega \} = \{ \beta^{[m]}\,\colon m\in\omega\}.
	\]
	
	\item[(f)] 
	$\alpha \ Z_0\ \beta$ if and only if $\alpha\triangle\beta$ has (asymptotic) density zero, i.e.
	\[
		\lim_{k\to \infty} \frac{\mathrm{card}(\{ i\leq k\,\colon \alpha\triangle\beta(i) = 1\})}{k+1} = 0.
	\]
\end{itemize}
These benchmark equivalence relations lie at the base of the Borel hierarchy: Figure~\ref{fig:benchmark}, which is taken from \cite{coskey2012hierarchy},  shows all continuous reducibilities between them (in fact, the diagram is the same even if we restrict to computable reductions). For more background about Borel, continuous, and computable reductions, see \cite{gao2008invariant,hjorth2010borel,miller2021computable,bazhenov2021computational}.

 \begin{figure}
\begin{center}
 \includegraphics[scale=1]{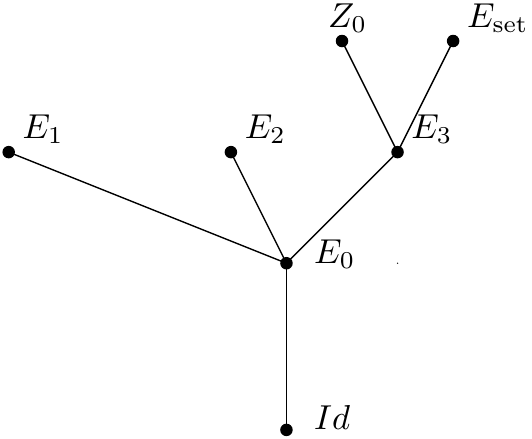}
\end{center}
\caption{Reductions up to continuous reducibility.}
\label{fig:benchmark}
 \end{figure}

\subsection*{Computable structures.}  A \emph{signature} $L$ lists all function symbols and relation symbols which characterize an algebraic structure. All our structures have  domain $\omega$. We say that two structures are \emph{copies} of each other if they are isomorphic. 


In computable structure theory, one measures the complexity of an $L$-stru\-cture $\mathcal{A}$ by identifying $\mathcal{A}$ with its \emph{atomic diagram},  i.e., the collection of atomic formulas  which are true of $\mathcal{A}$. Up to a suitable G\"odel numbering of $L$-formulas, the atomic diagram of $\mathcal{A}$ may be regarded as a real: this provides a natural way of assigning to each structure a Turing degree $\mathbf{d}$, representing its algorithmic complexity. 
Any computable structure $\mathcal{A}$ in a relational signature (i.e., with no function symbols) can be presented
as an increasing union of its finite substructures
\[
\A\restriction_0 \ \subseteq \A\restriction_1 \ \subseteq  \ldots \subseteq \A\restriction_i \ \subseteq \ldots,
\]
where $\A\restriction_n$ denotes the restriction of $\A$ to the domain $\{0,1,\ldots,n\}$  and $\A=\bigcup \mathcal{A} \restriction_i$.  For more background about computable structures, see \cite{AK00,EG-00}.

\subsection*{Infinitary formulas.}
To assess the model theoretic complexity of countable structures, it is common to work in the infinitary logic $\mathcal{L}_{\omega_1\omega}$, which allows to take the conjunctions or disjunctions of infinite sets of formulas. In particular, \emph{infinitary $\Sigma_n$ formulas} are defined as follows,
\smallskip

\begin{itemize}
	\item $\Sigma^{\inf}_0$  and $\Pi^{\inf}_0$ formulas are quantifier-free first-order formulas.
	
	\item A $\Sigma_{n+1}^{\inf}$ formula $\psi(\bar{x})$ is a countably infinite disjunction
	\[
		\underset{i\in I}{\bigvee\skipmm{6.8}\bigvee}\ \exists \bar y_i \xi_i(\bar x, \bar y_i),
	\]
	where each $\xi_i$ is a $\Pi^{\inf}_{n}$ formula.
	
	\item A $\Pi_{n+1}^{\inf}$ formula $\psi(\bar x)$ is a countably infinite conjunction
	\[
		\underset{i\in I}{\bigwedge\skipmm{6.8}\bigwedge}\ \forall \bar y_i \xi_i(\bar x, \bar y_i),
	\]
	where each $\xi_i$ is a $\Sigma^{\inf}_{n}$ formula.
\end{itemize}
Next, \emph{computable infinitary $\Sigma_n$ formulas} (or \emph{$\Sigma^c_n$ formulas}, for short) are defined in the same way as above, but requiring infinite conjunctions and disjunctions to range over c.e.\ sets of  (computable) formulas. Finally, computable infinitary formulas can be relativized to an arbitrary oracle $X$: the class of \emph{$X$-computable infinitary $\Sigma_n$~formulas} is denoted by $\Sigma^c_n(X)$. For more background about infinitary formulas, see~\cite{marker2016lectures}.

\subsection{Our framework} We shall now revisit  
the learning framework  presented in \cite{bazhenov2020learning}.  Our exposition  closely follows \cite{bazhenov2021turing}. In particular, we ignore how a given family is enumerated and we just assume that any structure $\A$ gives rise to a corresponding   \emph{conjecture} $\ulcorner \A\urcorner$, to be understood as conveying the piece of information ``this is $\A$''. 

\begin{defn}
Suppose that $\mathbf{P}$ is the learning problem associated to  a countable family $\mathfrak{K}$ of nonisomorphic computable structures. The ingredients of our framework may be specified as follows. For $\mathbf{P}$,

\begin{itemize}
\item The \emph{learning domain} ($\mathrm{LD}$)  is the collection of all copies of the structures from $\mathfrak{K}$. That is,
\[
\mathrm{LD}(\mathfrak{K}):=\bigcup_{\A \in \mathfrak{K}} \{\mathcal{S} : \mathcal{S}\cong \A\}.
\]
As we identify each countable structure with an element of Cantor space, we obtain that $\mathrm{LD}(\mathfrak{K})\subseteq 2^\omega$.
\item The \emph{hypothesis space} ($\mathrm{HS}$) contains, for each $\A\in \mathfrak{K}$, a formal symbol $\ulcorner \A \urcorner$  and a question mark symbol. That is, 
\[
\mathrm{HS}(\mathfrak{K}):=\{\ulcorner \A \urcorner : \A \in  \mathfrak{K} \}\cup \{?\}.
\]
\item A \emph{learner} $\mathbf{M}$ sees, by stages, all positive and negative data about any given structure in the learning domain and is required to output conjectures. This is formalized by saying that $\mathbf{M}$ is a function
\[
\mbox{from $2^{<\omega}$} \mbox{ to } \mathrm{HS}(\mathfrak{K}).
\]
\item The learning is \emph{successful} if, for each structure $\mathcal{S}\in\mathfrak{K}$, the learner eventually
stabilizes to a correct conjecture about its isomorphism type. That is, 
\[
\lim_{n\to \infty} \mathbf{M}(\mathcal{S}\restriction_n)=\ulcorner \mathcal{A}\urcorner  \mbox{ if and only if $\mathcal{S}$ is a copy of $\A$}.
\]
 \end{itemize}
 We say that $\mathfrak{K}$ is \emph{learnable}, if some learner $M$ successfully learns $\mathfrak{K}$.
\end{defn}

\begin{remark}
 In \cite{bazhenov2021turing}, the domain of a learner was limited to 
\[ 
 X:=\{\mathcal{S}\restriction_n \ \colon \mathcal{S}\in \mathrm{LD}(\mathfrak{K}) \},
 \]
  the collection of (finite) initial segments of structures from the learning domain. For our present purposes, it is more convenient to let $\mathbf{M}$ be  defined on all binary strings. This change is not problematic. Indeed, any learner with domain $X$ can be (non-effectively but continuously) transformed into a learner with domain $2^{<\omega}$ by simply   let $h(\sigma)=\ ?$, for all $\sigma\in 2^{<\omega}\smallsetminus X$.
\end{remark}

 In \cite[Theorem~3]{bazhenov2020learning}, we obtained the following model theoretic characterization of which families of structures are learna\-ble.

\begin{thm}[Bazhenov, Fokina, San Mauro] \label{theorem:characterization learning}
Let $\mathfrak{K}:=(\A_i)_{i\in\omega}$ be a countable  family of pairwise nonisomorphic structures. Then,
$\mathfrak{K}$ is learnable  if and only if there are $\Sigma^{\inf}_2$ formulas $\phi_0,\ldots,\phi_n, \ldots$ such that
\[
\A_i \models \phi_j \Leftrightarrow i=j.
\]
\end{thm}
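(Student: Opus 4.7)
The plan is to prove the two directions separately. The forward implication extracts a $\Sigma_2^{\inf}$ formula $\phi_i$ directly from the behavior of a learner $\mathbf{M}$ on $\A_i$, with the existential witness being a \emph{locking sequence}; the backward implication builds a learner by enumerating potential witnesses of the given $\Sigma_2^{\inf}$ formulas in a fixed order and outputting a conjecture based on the smallest surviving witness.

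For $(\Rightarrow)$, assuming $\mathbf{M}$ learns $\mathfrak{K}$, set
\[
\phi_i := \bigvee_{n \in \omega} \exists x_0 \ldots x_{n-1} \bigwedge_{m \geq n} \forall y_n \ldots y_{m-1}\ \theta_{i, n, m}(x_0, \ldots, x_{n-1}, y_n, \ldots, y_{m-1}),
\]
where $\theta_{i,n,m}$ is the (finite, quantifier-free) formula asserting that if the tuple $(x_0, \ldots, x_{n-1}, y_n, \ldots, y_{m-1})$ is pairwise distinct, then $\mathbf{M}$ applied to the atomic diagram induced by it outputs $\ulcorner \A_i \urcorner$. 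Since the signature is relational and the tuple has fixed length, only finitely many atomic diagrams are possible, so $\phi_i$ is genuinely $\Sigma_2^{\inf}$. The satisfaction $\A_i \models \phi_i$ is witnessed by a \emph{locking sequence} for $\A_i$ with respect to $\mathbf{M}$, whose existence follows from standard diagonalization: if no such sequence existed, one could inductively build a copy of $\A_i$ on which $\mathbf{M}$ outputs a wrong conjecture at infinitely many stages. Conversely, if $\A_i \models \phi_j$ for $j \neq i$, then extracting a witness tuple $\bar{a}$ in $\A_i$ and extending to a bijection $\pi\colon \omega \to \omega$ yields a copy $\mathcal{S} \cong \A_i$ on which $\mathbf{M}(\mathcal{S}\restriction_m) = \ulcorner \A_j \urcorner$ for all sufficiently large $m$, contradicting learnability.

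For $(\Leftarrow)$, write each $\phi_i$ as $\bigvee_k \exists \bar{a}_k\, \beta_{i, k}(\bar{a}_k)$ with $\beta_{i,k} = \bigwedge_l \forall \bar{z}\, \xi_{i, k, l}(\bar{a}_k, \bar{z})$, and fix an enumeration $q_0, q_1, \ldots$ of all triples $(i, k, \bar{a})$. Say $q_r = (i, k, \bar{a})$ is \emph{alive at stage $n$} if $r \leq n$, $\bar{a} \subseteq \{0, \ldots, n-1\}$, and $\xi_{i, k, l}(\bar{a}, \bar{z})$ holds in $\mathcal{S} \restriction_n$ for every $l \leq n$ and every $\bar{z} \subseteq \{0, \ldots, n-1\}$. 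Define $\mathbf{M}(\mathcal{S}\restriction_n) := \ulcorner \A_{i_r} \urcorner$ for the least $r$ alive at stage $n$ (and $?$ if no such $r$). To verify convergence on a copy $\mathcal{S}$ of $\A_{i^*}$, fix a true witness $(k^*, \bar{a}^*)$ for $\phi_{i^*}$ in $\mathcal{S}$ of enumeration index $r^*$: this triple remains alive from stage $\max(r^*, \max \bar{a}^* + 1)$ onward. Any $q_r$ with $r < r^*$ and $i_r \neq i^*$ has $\mathcal{S} \not\models \phi_{i_r}$, so $\beta_{i_r, k_r}(\bar{a}_r)$ is violated by some concrete $(l, \bar{z})$ and $q_r$ is eventually killed; once all such preceding triples die, the learner locks onto $\ulcorner \A_{i^*} \urcorner$.

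The main obstacle is the convergence argument in $(\Leftarrow)$. A naive strategy that, at each stage, outputs the smallest $i$ admitting an unfalsified witness in $\mathcal{S}\restriction_n$ can oscillate indefinitely: as more of $\mathcal{S}$ becomes visible, new $(k, \bar{a})$-witnesses for incorrect $\phi_j$'s keep entering play, each alive for a while before being refuted, so the set of currently alive witnesses for $\phi_j$ need not shrink monotonically. Prioritizing by a single fixed enumeration of triples (rather than by the index $i$) circumvents this because only finitely many triples precede the true witness $q_{r^*}$; once every such preceding triple with incorrect $i_r$ has died---after finitely many stages---the conjecture stabilizes at $\ulcorner \A_{i^*} \urcorner$.
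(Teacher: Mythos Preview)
The paper does not prove this theorem; it is quoted as \cite[Theorem~3]{bazhenov2020learning} from prior work, so there is no in-paper proof to compare against directly. The only internal hint is Lemma~\ref{pieces of gamma}, whose proof handles the two-formula case of the $(\Leftarrow)$ direction via a ``compatibility'' notion that is essentially your ``alive'' predicate.

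Your argument is correct in outline. For $(\Leftarrow)$, your device of ordering \emph{triples} $(i,k,\bar a)$ rather than indices $i$ is exactly the right fix for the oscillation problem you identify, and the convergence argument goes through: once every $q_r$ with $r<r^*$ and $i_r\neq i^*$ has been killed, the least alive triple has first coordinate $i^*$ (it is either $q_{r^*}$ or some earlier $q_r$ with $i_r=i^*$ that happens to survive), and since triples never resurrect, the conjecture stabilises. This is slightly cleaner than the tuple-priority scheme visible in Lemma~\ref{pieces of gamma}. For $(\Rightarrow)$, the locking-sequence strategy is the standard one and is sound: non-existence of a locking tuple lets you interleave ``extend badly'' with ``add the least missing element'' to build a surjective enumeration of $\A_i$ on which $\mathbf{M}$ fails.

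One technical point deserves tightening. In the paper's framework the learner is fed \emph{binary strings}, i.e.\ initial segments of the G\"odel-coded atomic diagram, not tuples of elements; and the signature is allowed to be infinite (cf.\ the family used in Theorem~\ref{thm:E_03_for_countable}). So your sentence ``since the signature is relational and the tuple has fixed length, only finitely many atomic diagrams are possible'' is literally false. What you actually need---and what is true---is that a length-$m$ prefix of the coded diagram involves only finitely many atomic formulas, each mentioning elements from a fixed finite set; hence $\theta_{i,n,m}$ can be written as a finite Boolean combination of those finitely many atomic formulas. Phrasing $\theta_{i,n,m}$ as ``the first $m$ bits of the diagram determined by the tuple $x_0,\dots,y_{m-1}$ cause $\mathbf{M}$ to output $\ulcorner\A_i\urcorner$'' (and letting the index $m$ track string length rather than tuple length, with the tuple length determined by which elements those $m$ bits mention) resolves the bookkeeping cleanly.
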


The interested reader is referred to  \cite{bazhenov2020learning} for motivating examples and a detailed discussion about our framework (there named $\Inf\Ex_{\cong}$-learning).

\begin{defn}
We say that a family of structures $\mathfrak{K}$ is \emph{countable} if it contains at most countably many isomorphism types. Similarly,   $\mathfrak{K}$ is \emph{finite}, if it contains only \emph{finitely} many isomorphism types.  In this paper, we won't consider uncountable families.
\end{defn}

\subsection*{Turing computable embeddings} 
We conclude these preliminaries with a brief reminder about the technology of Turing computable embeddings, which was fundamental for proving Theorem~\ref{theorem:characterization learning} and will play a decisive role in Section~\ref{thm:E_3-characterization}.

Turing computable embeddings allow to compare the algorithmic complexity of different isomorphism problems.

\begin{defn} (\cite{CCKM-04,KMV07}).
A Turing operator $\Phi$ is a \emph{Turing computable embedding} of $\mathfrak{K}_0$ into $\mathfrak{K}_1$ (notation: $\mathfrak{K}_0 \leq_{tc} \mathfrak{K}_1$) if it induces an embedding ${\mathfrak{K}_0}_{/\cong}\to {\mathfrak{K}_1}_{/\cong}$, that is,
if $\Phi$ satisﬁes the following:
\begin{itemize}
\item For any $\A \in \mathfrak{K}_0$, the function $\Phi^{\A}$
is the characteristic function of the atomic diagram of a structure from $\mathfrak{K}_1$. This
structure is denoted by $\Phi(\A)$.
\item For any $\A ,\B \in \mathfrak{K}_0$, we have 
\[
\A \cong \B \Leftrightarrow \Phi(\A)\cong \Phi(\B).
\]
\end{itemize}
\end{defn}

It is common to abbreviate the term ``Turing computable embedding'' as \emph{tc-embedding}. One of the most powerful tool in the theory of $tc$-embeddings is the so-called Pullback Theorem~\cite{KMV07}. In this paper, we'll adopt a natural relativization of this result, already employed in \cite{bazhenov2020learning}.  

\begin{thm}[Relativized Pullback Theorem]\label{Pullback Theorem}
Suppose that $X\in\omega$ and $\mathfrak{K}_0 \leq_{tc} \mathfrak{K}_1$ via a Turing $X$-operator $\Phi$. Then, for any $X$-computable
inﬁnitary sentence $\psi$ in the signature of $\mathfrak{K}_1$, one can ﬁnd, effectively with respect to $X$, an $X$-computable inﬁnitary sentence $\psi^\star$ in the
signature of $\mathfrak{K}_0$ such that, for all $\A \in \mathfrak{K}_0$, we have 
\[
\A \models \psi^\star \Leftrightarrow \Phi(\A) \models \psi.
\]
\end{thm}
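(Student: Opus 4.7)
The plan is to prove the theorem by induction on the level of $\psi$ in the $X$-computable infinitary hierarchy, strengthening the statement so that it can carry parameters from $\omega$ through the induction. More precisely, I would show that, uniformly in $X$ and in an index of $\psi$, every $\Sigma^c_n(X)$ (respectively $\Pi^c_n(X)$) formula $\psi(\bar x)$ in the signature of $\mathfrak{K}_1$ admits a $\Sigma^c_n(X)$ (respectively $\Pi^c_n(X)$) formula $\psi^\star(\bar x)$ in the signature of $\mathfrak{K}_0$ such that
\[
\A \models \psi^\star(\bar b) \ \Longleftrightarrow \ \Phi(\A) \models \psi(\bar b)
\]
for all $\A \in \mathfrak{K}_0$ and all tuples $\bar b$ from $\omega$. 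The sentence case of the theorem is the specialization $\bar x = \emptyset$.

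For the base case, fix an atomic formula $R(\bar x)$ in the signature of $\mathfrak{K}_1$ and a tuple $\bar b$ from $\omega$. Since $\Phi$ is a Turing $X$-operator and $\Phi^{X \oplus \A}$ equals the characteristic function of the atomic diagram of $\Phi(\A) \in \mathfrak{K}_1$, the bit encoding the truth value of $R(\bar b)$ in $\Phi(\A)$ is determined by a finite initial segment of the oracle. Hence $\Phi(\A) \models R(\bar b)$ iff there exists a finite partial $\mathfrak{K}_0$-structure $\sigma$ on some finite domain $D \supseteq \{\bar b\}$ such that (i) $\sigma$ is compatible with $\A$ and (ii) $\Phi^{X \oplus \sigma}$ already converges on the appropriate input to the bit asserting $R(\bar b)$. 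The set of such $\sigma$ is $X$-c.e., uniformly in $\bar b$ and in a G\"odel index of $R$. Encoding ``$\sigma$ is compatible with $\A$'' as a quantifier-free condition in the signature of $\mathfrak{K}_0$, and existentially binding the elements of $D \setminus \{\bar b\}$ together with inequalities to keep them distinct, yields a $\Sigma^c_1(X)$ formula $R(\bar x)^\star$ as required. The same argument, applied to the bit asserting $\neg R(\bar b)$ (which also eventually converges, since $\Phi^{X \oplus \A}$ is total), produces the pullback of a negated atomic formula.

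For the inductive step, suppose the claim is established for all formulas of levels up to $n$. Every $\Sigma^c_{n+1}(X)$ formula in the signature of $\mathfrak{K}_1$ has the form
\[
\psi(\bar x) \ = \ \underset{i \in I}{\bigvee\skipmm{6.8}\bigvee}\ \exists \bar y_i\ \xi_i(\bar x, \bar y_i),
\]
with $I$ an $X$-c.e.\ set of indices and each $\xi_i$ a $\Pi^c_n(X)$ formula. By the inductive hypothesis, uniformly in $i$ and in $X$ one obtains $\Pi^c_n(X)$ formulas $\xi_i^\star(\bar x, \bar y_i)$ in the signature of $\mathfrak{K}_0$ pulling back $\xi_i$. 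Because $\A$ and $\Phi(\A)$ share the universe $\omega$, the existential quantifier $\exists \bar y_i$ in the target is witnessed by the same natural number tuples in the source, so
\[
\psi^\star(\bar x) \ := \ \underset{i \in I}{\bigvee\skipmm{6.8}\bigvee}\ \exists \bar y_i\ \xi_i^\star(\bar x, \bar y_i)
\]
is a $\Sigma^c_{n+1}(X)$ formula whose index is computable from $X$ and an index of $\psi$. The $\Pi^c_{n+1}(X)$ case is entirely dual, using $X$-c.e.\ conjunctions and universal quantifiers.

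The main obstacle is the base case, where one has to turn the topological continuity of $\Phi$, packaged as a Turing $X$-operator, into a syntactic $\Sigma^c_1(X)$ description of those finite approximations to $\A$ that already force atomic facts in the atomic diagram of $\Phi(\A)$. The rest of the argument is a routine transfer of Boolean and quantifier structure, with effectivity in $X$ propagating through the induction via the standard closure properties of $X$-c.e.\ index sets; the only mild bookkeeping concerns recording the ``fresh'' elements of $D \setminus \{\bar b\}$ and keeping them formally distinct from the parameters and from one another.
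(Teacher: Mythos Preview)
The paper does not actually prove this theorem: it is stated in the preliminaries as a known relativization of the Pullback Theorem of Knight--Miller--Vanden Boom~\cite{KMV07}, already employed in~\cite{bazhenov2020learning}, and no argument is supplied. So there is nothing to compare your proposal against from within this paper.

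That said, your argument is precisely the standard proof of the (relativized) Pullback Theorem, and it is correct in outline. One small wrinkle: your strengthened induction hypothesis claims that $\Sigma^c_n(X)$ pulls back to $\Sigma^c_n(X)$ for every $n$, yet your own base case shows that a quantifier-free (level~$0$) formula pulls back only to a $\Sigma^c_1(X)$ formula, not to a quantifier-free one. This is harmless---for $n\geq 1$ the level is genuinely preserved, since a $\Sigma^c_1(X)$ pullback of a $\Pi^c_0$ subformula can be absorbed into the surrounding $\Sigma^c_1(X)$ disjunction/existential block (and dually for $\Pi^c_1(X)$)---but you should phrase the induction hypothesis accordingly, e.g., ``for $n\geq 1$, $\Sigma^c_n(X)$ pulls back to $\Sigma^c_n(X)$'', and treat $n=0$ separately. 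The theorem as stated here does not assert level preservation anyway, so this only matters if you want the sharper form (which the paper does implicitly use later, pulling back $\Sigma^{\inf}_2$ sentences to $\Sigma^{\inf}_2$ sentences).
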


Note that the Relativized Pullback Theorem can be applied to any continuous operator $\Phi$. Indeed, if $\Phi$ is a continuous, then, by Lemma~\ref{continuous is computable with oracle}, it is equivalent to a Turing $X$-operator for some suitable oracle $X$.

\medskip

We have amassed enough formal ingredients. Let's start.

\section{A new characterization of learnability}
In this section, we offer the promised descriptive set theoretic interpretation of our learning framework. Remember that $E_0$ denotes the relation of eventual agreement of reals, i.e., $\alpha \rel{E_0} \beta$ holds if and only if 
\[
(\exists m)(\forall n\geq m)(\alpha (n)=\beta(n)).\]

\begin{thm}\label{from E_0 to InfEx}
A family of structures $\mathfrak{K}$ is learnable if and only if there is a  continuous function $\Gamma : 2^\omega \to 2^\omega$  such that
\[
\A \rel{\cong} \B \Leftrightarrow \Gamma(\A) \rel{E_0} \, \Gamma(\B),
\]
for all $\A, \B \in \mathrm{LD}(\mathfrak{K})$.
\end{thm}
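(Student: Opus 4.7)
The plan is to establish each direction separately, using different techniques. For the forward direction, given a learner $\mathbf{M}$ for $\mathfrak{K}=\{\A_i\}_{i\in\omega}$, I would define a continuous $\Gamma:2^\omega\to 2^\omega$ by
\[
\Gamma(\alpha)(\langle i,n\rangle)=1 \iff \mathbf{M}(\alpha\restriction_n)=\ulcorner\A_i\urcorner.
\]
Continuity is immediate since each output bit depends only on a finite prefix of $\alpha$. The reduction property is then checked by a case split on the isomorphism types: for $\A\cong\A_i$ and $\B\cong\A_j$, if $i=j$ then both $\Gamma(\A)$ and $\Gamma(\B)$ are eventually $1$ on column $i$ and $0$ on every other column, yielding $E_0$-equivalence; if $i\neq j$, the two reals eventually disagree on every position of column $i$, so they fail $E_0$.

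For the backward direction, rather than build a learner by hand, I would route through the model-theoretic characterization provided by Theorem~\ref{theorem:characterization learning}. Given a continuous reduction $\Gamma$, Lemma~\ref{continuous is computable with oracle} supplies an oracle $X$ and a Turing operator $\Phi$ with $\Gamma=\Phi^{X\oplus\,\cdot}$. Fix $\gamma_i:=\Gamma(\A_i)$, which is $X$-computable. The key observation is that, for each $n\in\omega$ and $c\in\{0,1\}$, the statement ``$\Gamma(\A)(n)=c$'' is witnessed by the existence of a finite fragment $\sigma$ of the atomic diagram of $\A$ with $\Phi^{X\oplus\sigma}(n)\downarrow=c$, hence is a $\Sigma^c_1(X)$ condition on $\A$. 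Consequently, the formula
\[
\phi_i(\A)\ :=\ (\exists m)(\forall n\geq m)\,\Gamma(\A)(n)=\gamma_i(n),
\]
which expresses ``$\Gamma(\A)\rel{E_0}\gamma_i$'', is $\Sigma^c_2(X)$ and a fortiori $\Sigma^{\inf}_2$. The reduction property forces $\A_j\models\phi_i$ iff $\Gamma(\A_j)\rel{E_0}\Gamma(\A_i)$ iff $\A_i\cong\A_j$ iff $i=j$, so Theorem~\ref{theorem:characterization learning} delivers learnability.

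The main obstacle is in the backward direction, where the infinitary formulas produced must land at exactly the level $\Sigma^{\inf}_2$ rather than at a higher tier. The delicate point is that mere continuity of $\Gamma$ does not a priori bound the arithmetical complexity of ``$\Gamma(\A)(n)=c$''; the reason this collapses to $\Sigma^c_1(X)$ is Lemma~\ref{continuous is computable with oracle}, whose effectivization of continuity lets a single oracle $X$ handle every bit uniformly. The $\exists\forall$-block built into $E_0$ then adds precisely one further quantifier alternation, placing $\phi_i$ at exactly the $\Sigma_2$ tier needed to invoke Theorem~\ref{theorem:characterization learning}.
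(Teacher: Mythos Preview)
Your proof is correct in both directions, but the backward direction takes a genuinely different route from the paper.

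For the forward direction, the paper fixes an arbitrary transversal $(\alpha_i)_{i\in\omega}$ of $E_0$ and sets $\Gamma(\beta)(s)=\alpha_{\mathbf{M}(\beta\restriction_s)}(s)$; your column encoding is essentially the same idea with the specific transversal $\rho_i(\langle j,n\rangle)=[j=i]$. One small point worth making explicit: your verification that $\A\cong\B\cong\A_i$ implies $\Gamma(\A)\rel{E_0}\Gamma(\B)$ reads as a column-by-column statement, which on its face only gives $E_3$. The reason it collapses to $E_0$ is that each row $\{\langle j,n\rangle : j\in\omega\}$ carries at most one $1$ (the learner outputs a single conjecture at stage $n$), so the total number of disagreements with $\rho_i$ is bounded by twice the stabilization stage. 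You should say this.

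For the backward direction the approaches diverge substantially. The paper constructs a learner by hand: it defines a similarity measure $f_{sim}(\alpha;i,s)$ tracking how long $\Gamma(\alpha)$ has agreed with $\beta_i$ at the tail, and runs a three-state strategy that cycles through candidate indices until it locks onto the correct one. This is self-contained but requires a nontrivial verification. You instead factor through Theorem~\ref{theorem:characterization learning}: using Lemma~\ref{continuous is computable with oracle} to make $\Gamma$ into an $X$-operator, you observe that ``$\Gamma(\A)(n)=c$'' is $\Sigma^c_1(X)$ (and, by totality of $\Gamma$ on $2^\omega$, also $\Pi^c_1(X)$), so the $\exists\forall$ shape of $E_0$ lands the separating sentences $\phi_i$ at $\Sigma^{\inf}_2$. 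This is shorter and more conceptual, at the cost of invoking Theorem~\ref{theorem:characterization learning} as a black box; the paper's direct construction is longer but keeps Theorem~\ref{from E_0 to InfEx} independent of the earlier syntactic characterization.
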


\begin{proof} For the sake of exposition, we'll assume that $\mathfrak{K}$ is infinite (the other case being easier) and it coincides with $(\A_i)_{i\in\omega}$. Denote $\Gamma(\A_i)$ by $\beta_i$.

\smallskip

 $(\Rightarrow)$: Let $\Gamma$ be a function which induces a continuous reduction from $\mathrm{LD}(\mathfrak{K})_{/ \cong}$ to $E_0$. We need to show that $\mathfrak{K}$ is learnable.  Certainly, $(\beta_i\ \cancel{E_0}\ \beta_j)$, for all $i\neq j$. 
	Since  $\Gamma$ is continuous, by Lemma~\ref{continuous is computable with oracle} there exists an oracle $X\in 2^{\omega}$ and a Turing operator $\Phi$ so that 
	\[
		\Gamma(\alpha) = \Phi^{X\oplus \alpha}, \text{ for every } \alpha\in 2^{\omega}.
	\]
	
	Let $\alpha$ be a real. We define an auxiliary function $f_{sim}(\alpha;i,s)$. Informally speaking, $f_{sim}(\alpha;i,s)$ is a measure of similarity (at the stage $s$) between the reals $\Gamma(\alpha)$ and $\beta_i$.
	
	Let $\ell[s]$ be the greatest number such that for every $x\leq \ell[s]$, the value $\Phi^{(X\oplus \alpha)\upharpoonright s}(x)[s]$ is defined. If there is no such $\ell[s]$, then set $f_{sim}(\alpha;i,s) := -1$ for all $i\in\omega$.
	
	Otherwise, for an index $i\in\omega$, we put
	\[
		f_{sim}(\alpha;i,s) := \begin{cases}
			\max \big\{ k \leq \ell\,\colon \text{for every } j\leq k, & \\
			\phantom{MM} \Phi^{(X\oplus \alpha)\upharpoonright s}(\ell-j) = \beta_i(\ell - j) \big\}, & \text{if } \Phi^{(X\oplus \alpha)\upharpoonright s}(\ell) = \beta_i(\ell);\\
			-1, & \text{otherwise}.
		\end{cases}
	\]
	Here by $\ell$ we denote $\ell[s]$.
	Without loss of generality, we assume that $\ell[s+1] \in \{ \ell[s], \ell[s]+1 \}$.
	
	It is not hard to show that the function $f_{sim}$ satisfies the following properties. Suppose that a real $\alpha$ encodes a copy of the structure $\mathcal{A}_{i_0}$, for some $i_0\in\omega$.
	\begin{itemize}
		\item[(a)] Note that there is an index $m_0$ such that for all $x\geq m_0$, we have $\Gamma(\alpha)(x) = \beta_{i_0}(x)$. This implies that there exists a stage $s_0$ such that every $s\geq s_0$ satisfies $f_{sim}(\alpha;i_0,s+1) \geq f_{sim}(\alpha;i_0,s) > -1$. In addition, 
		\[
			\lim_s f_{sim}(\alpha;i_0,s) = \infty.
		\]
		
		\item[(b)] Let $i\neq i_0$. Since $(\Gamma(\alpha) \ \cancel{E_0}\ \beta_i)$ and $\ell[s+1] \leq \ell[s] +1$ for all $s$, there are infinitely many stages $s$ such that $\Phi^{(X\oplus \alpha)\upharpoonright s}(\ell[s]) \neq \beta_i(\ell[s])$ and $f_{sim}(\alpha;i,s) = -1$. Therefore,
		\[
			\liminf_s f_{sim}(\alpha;i,s) = -1.
		\]
	\end{itemize}
	
	\medskip
	
	\subsection*{Construction} We build our desired learner $\mathbf{M}$. Let $\alpha$ be a real, and let $s$ be a natural number. We put $\mathbf{M}(\text{empty string}) := \ ?$\,.
	
	For the number $s$, we define two auxiliary parameters $b[s] \in \omega$ and $c[s] \in \{ 0,1,2\}$. Informally, these parameters control our learning strategy, i.e., what the learner $\mathbf{M}$ should output. We will ensure the following property: $c[s] = 0$ if and only if $\mathbf{M}(\alpha\upharpoonright s) =\ ?$\,.
	We put $b[0] := 0$ and $c[0]:=0$. If not specified otherwise, we assume that $b[s+1] = b[s]$ and $c[s+1] = c[s]$. 
	
\smallskip	
	
	Suppose that $\mathbf{M}(\alpha\upharpoonright s)$ is defined. We describe how to obtain the value $\mathbf{M}(\alpha \upharpoonright s+1)$. Consider the following three cases:

	\subsubsection*{Case 1} Suppose that $c[s] = 0$. Then we search for the least $j\leq s$ such that $f_{sim}(\alpha;j,s) \neq -1$ and 
	\[ 
		f_{sim}(\alpha; j, s) = \max\{ f_{sim}(\alpha;m,s) \,\colon m\leq s \}.
	\]
	\begin{itemize}
	\item If there is such $j$, then set $\mathbf{M}(\alpha\upharpoonright s+1 ) := \ulcorner \mathcal{A}_j\urcorner$, $c[s+1] := 1$, and $b[s+1]:=s$;
	\item Otherwise, put $\mathbf{M}(\alpha\upharpoonright s+1) :=\ ?$\,.
\end{itemize}

	\subsubsection*{Case 2} Suppose that $c[s] = 1$ and $\mathbf{M}(\alpha \upharpoonright s) = \ulcorner \mathcal{A}_i\urcorner$. 
	
	\begin{itemize}
	\item If $f_{sim}(\alpha;i,s+1) \neq -1$, then define $\mathbf{M}(\alpha\upharpoonright s+1 ) := \ulcorner \mathcal{A}_i\urcorner$;
	\item If $f_{sim}(\alpha;i,s+1) = -1$, then put $\mathbf{M}(\alpha\upharpoonright s+1 ) := \ulcorner \mathcal{A}_{0}\urcorner$ and $c[s]:=2$. 
	\end{itemize}

	\subsubsection*{Case 3}  Suppose that $c[s] = 2$ and $\mathbf{M}(\alpha \upharpoonright s) = \ulcorner \mathcal{A}_i\urcorner$.  Our construction will ensure that in this case, we have $i \leq b[s]$.
	\begin{itemize}
	\item If $f_{sim}(\alpha;i,s+1) \neq -1$, then $\mathbf{M}(\alpha\upharpoonright s+1 ) := \ulcorner \mathcal{A}_i\urcorner$;
	\item If $f_{sim}(\alpha;i,s+1) = -1$ and $i < b[s]$, then set $\mathbf{M}(\alpha\upharpoonright s+1 ) := \ulcorner \mathcal{A}_{i+1}\urcorner$;
	\item 	If $f_{sim}(\alpha;i,s+1) = -1$ and $i = b[s]$, then define $\mathbf{M}(\alpha\upharpoonright s+1 ) := \ ?$ and $c[s]:=0$.
	\end{itemize}

	\subsection*{Verification} We show that $\mathbf{M}$ learns our family $\mathfrak{K}$. Suppose that $\alpha$ is a real, which encodes a copy $\mathcal{S}$ of some $\mathcal{A}_{i_0}$.
		
	Let $s_0$ be a stage such that $f_{sim}(\alpha;i_0,s) \neq -1$ for all $s \geq s_0$. \emph{Case~1} of the construction ensures that there are infinitely many stages $s \geq s_0$ such that $\mathbf{M}(\alpha \upharpoonright s) = \ulcorner \mathcal{A}_j \urcorner$ for some $j\in\omega$. 
	
	If there exists a stage $s_1 \geq s_0$ such that $\mathbf{M}(\alpha \upharpoonright s_1) = \ulcorner \mathcal{A}_{i_0} \urcorner$, then we have
	\[
		\mathbf{M}(\alpha \upharpoonright s) = \ulcorner \mathcal{A}_{i_0} \urcorner \text{ for all } s \geq s_1,
	\]
	and in the limit, $\mathbf{M}$ outputs the correct conjecture. Thus, it is sufficient to establish the existence of this $s_1$.
	
	Let $s'$ be an arbitrary stage such that $s'\geq s_0$ and $\mathbf{M}(\alpha \upharpoonright s') = \ulcorner \mathcal{A}_j \urcorner$ for some $j$. If $j = i_0$, then there is nothing to prove. 
	Assume that $j\neq i_0$. There are again three cases:
	
	\subsubsection*{Case I} If $c[s'] = 1$, then eventually we will witness a sequence of stages $s' \leq s'' < s''_0 < s''_1 <\dots < s''_{i_0-1}$ such that
	\begin{multline*}
		f_{sim}(\alpha;j,s'') = f_{sim}(\alpha; 0, s''_0) = f_{sim}(\alpha; 1, s''_1) = \dots =\\ 
		= f_{sim}(\alpha; i_0-1, s''_{i_0-1}) = -1.
	\end{multline*}
	Hence, the construction ensures that $\mathbf{M}(\alpha \upharpoonright s''_{i_0-1}) = \ulcorner \mathcal{A}_{i_0} \urcorner$.
	
	\subsubsection*{Case II} If $c[s'] = 2$ and $j<i_0$, then we will find a sequence of stages $s' \leq s''_j < s''_{j+1} <\dots < s''_{i_0-1}$ such that
	\[
		f_{sim}(\alpha; k, s''_k) = -1 \text{ for every } j\leq k <i_0.
	\]
	Again, we have $\mathbf{M}(\alpha \upharpoonright s''_{i_0-1}) = \ulcorner \mathcal{A}_{i_0} \urcorner$.
	
	\subsubsection*{Case III} If $c[s'] = 2$ and $j > i_0$, then there will be a sequence of stages $s' \leq s''_j < s''_{j+1} < \dots < s''_{b[s']}$ such that
	\[
		f_{sim}(\alpha; k, s''_k) = -1 \text{ for every } j\leq k \leq b[s'].
	\]
	Then for $s^{\ast} := s''_{b[s']} + 1$, we have $c[s^{\ast}] = 1$ and $\mathbf{M}(\alpha \upharpoonright s^{\ast}) = \ulcorner \mathcal{A}_{\ell} \urcorner$ for some $\ell$. After that, the argument proceeds similarly to the first case.
	
\smallskip	
	
	Therefore, we proved that there is $s_1 \geq s_0$ with $\mathbf{M}(\alpha \upharpoonright s_1) = \ulcorner \mathcal{A}_{i_0} \urcorner$. This implies that in the limit, $\mathbf{M}$ says that ``$\mathcal{S}$ is a copy of $\mathcal{A}_{i_0}$'', and the family $\mathfrak{K}$ is learnable by $\mathbf{M}$.

\medskip

$(\Leftarrow)$: For the converse direction, let $\mathbf{M}$ be a learner of $\mathfrak{K}$. We need to construct a continuous  function $\Gamma : 2^\omega \to 2^\omega$ which induces a reduction from $\mathrm{LD}(\mathfrak{K})_{/ \cong}$ to $E_0$. To this end, it suffices to fix a countably infinite transversal $(\alpha_i)_{i\in\omega}$ of $E_0$ (i.e. a set intersecting countably many equivalence classes of $E_0$ in exactly one point) and define $\Gamma$ as follows,
\[
\Gamma(\beta)(s):=\alpha_{\mathbf{M}(\beta\restriction_s)}{(s)}.
\] 
Here we use the following convention:
\begin{itemize}
    \item if $\mathbf{M}(\beta\restriction_s) = \ulcorner \mathcal{A}_i\urcorner$, then $\alpha_{\mathbf{M}(\beta\restriction_s)} = \alpha_i$;
    
    \item if $\mathbf{M}(\beta\restriction_s) = \ ?$, then $\alpha_{\mathbf{M}(\beta\restriction_s)} = 0^{\infty}$.
\end{itemize}

To verify that this $\Gamma$ works, it is enough to observe the following: if a real $\beta$ encodes a copy of some $\A_i$ from $\mathrm{LD}(\mathfrak{K})$, then   there must be a stage $s_0$ such that, for all $s\geq s_0$, $\mathbf{M}(\beta \restriction_s)$ outputs $\ulcorner \mathcal{A}_i \urcorner$, and thus $\Gamma(\beta)$ is $E_0$-equivalent to $\alpha_i$.
So, since the $\alpha_i$'s form a transversal for $E_0$, we deduce that, if $\beta_0$ and $\beta_1$ encode copies of $\A_i$ and $\A_j$ respectively,  then
\[
\Gamma(\beta_0)\rel{E_0}\Gamma(\beta_1) \Leftrightarrow i=j.
\]
This concludes the proof.
\end{proof}

The above theorem unlocks a natural way to stratify learning
problems, by simply replacing $E_0$ with Borel equivalence
relations of higher and higher complexity.

\begin{defn}
A family of structures $\mathfrak{K}$ is \emph{$E$-learnable} if there is
 function $\Gamma : 2^\omega\to 2^\omega$ which continuously
reduce $\mathrm{LD}(\mathfrak{K})_{/\cong}$ to $E$.
\end{defn}

\begin{defn}
Let $E,F$ be Borel equivalence relations. $E$ is \emph{$\mathrm{Learn}^\omega$-reducible} to $F$, if every countable $E$-learnable
family is also $F$-learnable. $E$ is \emph{$\mathrm{Learn}^{<\omega}$-reducible} to $F$, if
every finite $E$-learnable family is also $F$-learnable.
\end{defn}

The rest of the paper is devoted to the study of the learning of power of some benchmark Borel equivalence relations.


\section{When oracle equivalence relations don't help}

In this section, we analyze the learning power of $E_1$ and $E_2$. These equivalence relations are incomparable and strictly above $E_0$ with respect to continuous reductions; in fact, the same is true if one requires computable reductions. But, as is proven in Theorems~\ref{prop:E_01} and \ref{thm:E_02}, $E_1$ and $E_2$ coincide and collapse to $E_0$ with respect to their learning power.


\subsection{$E_1$-learning} Recall that the equivalence relation $E_1$ is given by
	\[
(\alpha\rel{E_1}\beta) \Leftrightarrow		(\forall^{\infty} m\in\omega) (\alpha^{[m]} = \beta^{[m]}).
	\]

\begin{thm}\label{prop:E_01}
	A family $\mathfrak{K}$ is $E_1$-learnable if and only if $\mathfrak{K}$ is $E_0$-learnable. That is, $E_1$ and $E_0$ are $\mathrm{Learn}^\omega$-equivalent.
\end{thm}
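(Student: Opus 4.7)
For the easy direction ($E_0$-learnable implies $E_1$-learnable), I would exhibit a continuous reduction $\Delta\colon 2^\omega\to 2^\omega$ from $E_0$ to $E_1$, namely $\Delta(\alpha)^{[n]}(k) := \alpha(n+k)$; unpacking definitions, $\Delta(\alpha) \rel{E_1} \Delta(\beta)$ iff $\alpha(p)=\beta(p)$ for all large $p$, i.e., iff $\alpha \rel{E_0} \beta$. Composing $\Delta$ with any continuous $E_0$-reduction of $\mathrm{LD}(\mathfrak{K})_{/\cong}$ yields an $E_1$-reduction.

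For the main direction, I would invoke Theorem~\ref{from E_0 to InfEx}, which turns the goal into constructing a learner $\mathbf{M}$ for $\mathfrak{K}$ out of a continuous $E_1$-reduction $\Gamma$. By Lemma~\ref{continuous is computable with oracle} write $\Gamma = \Phi^{X \oplus \cdot}$, and set $\beta_j := \Gamma(\A_j)$ for each fixed computable copy $\A_j \in \mathfrak{K}$. The key structural fact is that $\Gamma(\mathcal{S}) \rel{E_1} \beta_j$ admits an explicit $\Sigma^0_2$-witness: it holds iff some tail index $m$ satisfies the $\Pi^0_1$ condition $\forall n \geq m\,(\Gamma(\mathcal{S})^{[n]} = \beta_j^{[n]})$. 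Guided by this, I enumerate all pairs $(j,m)\in\omega\times\omega$ as $p_0, p_1, \dots$ and, at each stage $s$, after computing the currently available prefix $\tau_s$ of $\Gamma(\mathcal{S})$, I call $p_k = (j,m)$ \emph{refuted at $s$} if some column $n \geq m$ exhibits a mismatch $\tau_s(\langle n, i\rangle) \neq \beta_j(\langle n, i\rangle)$ within the computed range. The learner $\mathbf{M}(\mathcal{S}\restriction s)$ then outputs $\ulcorner \A_j\urcorner$ for the least $k \leq s$ with $p_k = (j,m)$ unrefuted at $s$, and $?$ if no such $k$ exists.

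To verify, suppose $\mathcal{S}$ is a copy of $\A_{i_0}$, let $m^\ast$ be the minimum witness to $\Gamma(\mathcal{S}) \rel{E_1} \beta_{i_0}$, and let $k^\ast$ be the enumeration index of $(i_0, m^\ast)$. Then $p_{k^\ast}$ is unrefutable, while each $p_k$ with $k < k^\ast$ is eventually refuted: if $p_k = (j,m)$ with $j \neq i_0$, then $\Gamma(\mathcal{S}) \nrel{E_1} \beta_j$ produces mismatching columns at arbitrarily large indices, at least one of which lies in $[m,\infty)$; if $p_k = (i_0,m)$ with $m < m^\ast$, then minimality of $m^\ast$ forces a mismatch at some $n \geq m$. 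Past a finite stage $s^\ast$, the least unrefuted index becomes $k^\ast$, so $\mathbf{M}$ converges to $\ulcorner\A_{i_0}\urcorner$. Applying the ($\Leftarrow$) direction of Theorem~\ref{from E_0 to InfEx} to $\mathbf{M}$ then yields the sought continuous $E_0$-reduction. I expect the main delicacy to lie not in the construction itself but in identifying the correct witness structure: $E_1$-equivalence to a fixed parameter is $\Sigma^0_2$, matching precisely the expressive strength of $\Sigma^{\inf}_2$ from Theorem~\ref{theorem:characterization learning}, and this is exactly why $E_1$ collapses to $E_0$ at the level of learning despite being strictly above it under continuous reducibility.
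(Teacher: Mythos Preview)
Your proposal is correct and takes a genuinely different route from the paper. The paper never builds a learner: for the hard direction it constructs a continuous reduction to $E_0$ directly by composing $\Gamma$ with a coordinate-selection map $\Psi$. Specifically, using a bijection $\xi$ over triples $(i,j,t)$ with $i\neq j$, it picks for each such triple a fresh column index $q$ witnessing $\beta_i^{[q]}\neq\beta_j^{[q]}$ and a position $\ell$ of disagreement, sets $m_{s+1}=\langle q,\ell\rangle$, and defines $\Psi(\alpha)(s)=\alpha(m_s)$; then $\Psi\circ\Gamma$ reduces $\mathrm{LD}(\mathfrak{K})_{/\cong}$ to $E_0$ because $E_1$-equivalent reals agree on cofinitely many of the selected coordinates while $E_1$-inequivalent $\beta_i,\beta_j$ disagree on infinitely many of them. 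Your approach instead exploits that $E_1$-equivalence to a fixed target is $\Sigma^0_2$, builds a learner via a least-unrefuted-witness search, and then invokes Theorem~\ref{from E_0 to InfEx}. The paper's argument is self-contained and yields an explicit reduction; yours is more conceptual and makes transparent \emph{why} the collapse happens---namely the $\Sigma^0_2$ witness structure, which aligns with Theorem~\ref{theorem:characterization learning}.

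One small slip in your verification: you assert that every $p_k$ with $k<k^\ast$ is eventually refuted, but your case split omits $p_k=(i_0,m)$ with $m>m^\ast$, which can occur depending on the enumeration and is \emph{not} refuted. This is harmless, since such a pair still outputs $\ulcorner\mathcal{A}_{i_0}\urcorner$; the clean fix is to let $k^\ast$ be the least index of a never-refuted pair and observe that its first coordinate must be $i_0$.
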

\begin{proof}
 	Since $E_0$ is continuously reducible to $E_1$ (see Figure~\ref{fig:benchmark}), every $E_0$-learnable family is also $E_1$-le\-arn\-able.
 	
	On the other hand, let $\mathfrak{K} := (\mathcal{A}_i)_{i\in\omega}$ be an $E_1$-learnable family. Let $\Gamma: 2^\omega \to 2^\omega$ induce a continuous reduction from $\mathrm{LD}(\mathfrak{K})_{/\cong}$ to $E_1$. For each $i\in \omega$, we choose a real $\beta_i$ such that $\Gamma$ maps all copies of $\mathcal{A}_i$ into the class $[\beta_i]_{E_1}$.
	Fix a computable bijection $\xi$ from the set $\{ (i,j) \in\omega^2 \,\colon i\neq j\} \times \omega$ onto $\omega$.
	
	We build a set $X = \{ m_s\,\colon s\in\omega\}$ as follows. Put $m_0 := 0$. 
		Suppose that $s = \xi(i,j,t)$ and $m_s$ is already defined. Since $(\beta_i \ \cancel{E_1}\ \beta_j)$, there exists the least $q > m_s$ such that $\beta^{[q]}_i \neq \beta^{[q]}_j$. We choose the least $\ell \in\omega$ with $\beta^{[q]}_i (\ell) \neq \beta^{[q]}_j(\ell)$, and put $m_{s+1} := \langle q,\ell \rangle$.
	It is not hard to see that for every $q\in\omega$, there is at most one $\ell$ such that $\langle q, \ell\rangle$ belongs to $X$. 
	
	We define an operator $\Psi$ as follows: for every real $\alpha$ and $s \in\omega$, set
	\[
		\Psi(\alpha)(s) := \alpha(m_s).
	\]
	It is clear that the operator $\Psi$ is $X$-computable~--- hence, $\Psi$ is continuous. 
	
	We show that the operator $\Phi := \Psi\circ \Gamma$ provides a continuous reduction from $\mathrm{LD}(\mathfrak{K})_{/\cong}$ to $E_0$. Let $\alpha$ be a real which encodes a copy of some $\mathcal{A}_{i_0}$. Since $(\Gamma(\alpha)\ E_1\ \beta_{i_0})$, almost every $s\in\omega$ satisfies $\Gamma(\alpha)(m_s) = \beta_{i_0}(m_s)$. Thus, $\Phi(\alpha)$ is $E_0$-equivalent to $\Psi(\beta_{i_0})$.
	
	Suppose that $i\neq i_0$. Then for almost all $t\in\omega$, we have
	\[
		\Gamma(\alpha)( m_{\xi(i,i_0,t)} ) = \beta_{i_0}( m_{\xi(i,i_0,t)} ) \neq \beta_{i}( m_{\xi(i,i_0,t)} ).
	\]
	This implies that $(\Phi(\alpha) \ \cancel{E_0}\ \Psi(\beta_i))$. Therefore, we deduce that our family $\mathfrak{K}$	is $E_0$-learnable.	
	The theorem is proved.
\end{proof}

\subsection{$E_2$-learning} Recall that the equivalence relation $E_2$ is given by
\[
\alpha \ E_2\ \beta \Leftrightarrow
		\sum_{k=0}^{\infty} \frac{(\alpha\triangle\beta)(k)}{k+1}\ <\ \infty.
	\]

\begin{thm}\label{thm:E_02}
	A countable family $\mathfrak{K}$ is $E_2$-learnable if and only if $\mathfrak{K}$ is $E_0$-learnable. That is, $E_2$ and $E_0$ are $\mathrm{Learn}^\omega$-equivalent.
\end{thm}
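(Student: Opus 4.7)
My plan for the forward direction is trivial: since $E_0$ is continuously reducible to $E_2$ (Figure~\ref{fig:benchmark}), every $E_0$-learnable family is already $E_2$-learnable. For the converse I will not try to mimic Theorem~\ref{prop:E_01} by postcomposing $\Gamma$ with a column-selection operator $\Psi(\alpha)(s)=\alpha(m_s)$: $E_2$-equivalence does not force cofinite bitwise agreement on any fixed set of positions (only the weaker density-zero condition on symmetric differences), so no analogous thinning can turn $\Gamma$ into a continuous $E_0$-reduction. Instead my plan is to build a learner for $\mathfrak{K}$ directly and then invoke Theorem~\ref{from E_0 to InfEx} to convert it into the required continuous reduction to $E_0$.

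First I will fix a continuous $\Gamma$ witnessing $E_2$-learnability and, by Lemma~\ref{continuous is computable with oracle}, an oracle $X$ and Turing operator $\Phi$ with $\Gamma(\alpha)=\Phi^{X\oplus\alpha}$; set $\beta_i:=\Gamma(\mathcal{A}_i)$. For a real $\alpha$ and stage $s$, let $\ell[s]$ be the longest prefix on which $\Phi^{(X\oplus\alpha)\restriction s}$ has converged by stage $s$, and introduce the partial $E_2$-distance
\[
D_s(\alpha,i)\;:=\;\sum_{k\leq\ell[s]}\frac{(\Phi^{(X\oplus\alpha)\restriction s}(k)\,\triangle\,\beta_i(k))}{k+1}.
\]
Assuming (WLOG) that $\ell[s]$ is monotone in $s$, each $D_s(\alpha,i)$ is nondecreasing in $s$ and tends to the full $E_2$-distance between $\Gamma(\alpha)$ and $\beta_i$. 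The dichotomy I will exploit is that when $\alpha$ encodes a copy of $\mathcal{A}_{i_0}$ we have $D_s(\alpha,i_0)\nearrow L_{i_0}<\infty$, while $D_s(\alpha,i)\nearrow\infty$ for every $i\neq i_0$.

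The learner $\mathbf{M}$ will imitate the three-case mind-change skeleton from the proof of Theorem~\ref{from E_0 to InfEx}, with $D_s$ playing the role of $-f_{sim}$. Keeping control parameters $c[s]\in\{0,1,2\}$ and $b[s]\in\omega$, the rule will be: in Case~1 commit to $\ulcorner\mathcal{A}_j\urcorner$ for the least $j\leq s$ minimizing $D_s(\alpha,\cdot)$ and set $b[s]=s$; in Case~2 keep the current commitment while no other candidate beats it by more than $1$; in Case~3 (after an abandonment) sweep $\ulcorner\mathcal{A}_0\urcorner,\ulcorner\mathcal{A}_1\urcorner,\dots,\ulcorner\mathcal{A}_{b[s]}\urcorner$ in order until a stable new commitment emerges.

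The hard part will be dealing with fresh indices $i$ entering the candidate pool at large $s$ whose $\beta_i$ happens to match $\Gamma(\alpha)$ on all of $[0,\ell[s]]$: such an $i$ can temporarily depress $D_s(\alpha,i)$ below $D_s(\alpha,i_0)$ and mimic a refutation of the correct commitment, so a naive argmin rule could oscillate to ever-larger spurious indices. The way out is the monotonicity of $D_s(\alpha,\cdot)$ in $s$: once $D_s(\alpha,i)>L_{i_0}+1$ is observed it persists at every later stage, so each wrong $i$ is permanently eliminated after a finite cutoff. As in the proof of Theorem~\ref{from E_0 to InfEx}, this bounds the number of mind changes and forces $\mathbf{M}$ to stabilize on $\ulcorner\mathcal{A}_{i_0}\urcorner$; Theorem~\ref{from E_0 to InfEx} then converts $\mathbf{M}$ into the desired continuous reduction to $E_0$.
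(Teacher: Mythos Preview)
Your overall strategy matches the paper's: both arguments introduce the partial $E_2$-sums $p(i,s)=D_s(\alpha,i)$, exploit their monotonicity in $s$, and run a bounded-search learner governed by control parameters $b[s],c[s]$. The paper even outputs $\gamma_\alpha(s)=\beta_{i[s]}(s)$ directly, which is exactly the learner-to-$E_0$-reduction translation you plan to get from Theorem~\ref{from E_0 to InfEx}, so that difference is cosmetic.

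There is, however, a genuine gap in your abandonment rule. Your Case~2 abandons the current guess $i$ once some other candidate ``beats it by more than $1$'', i.e.\ once $D_s(\alpha,j)<D_s(\alpha,i)-1$ for some $j$. Your justification is that each wrong $i$ is permanently eliminated once $D_s(\alpha,i)>L_{i_0}+1$; but that only controls mind changes \emph{caused by a fixed $i$}. It does not stop a fresh index $j$ (with $\beta_j$ agreeing with $\Gamma(\alpha)$ on a long prefix, so $D_s(\alpha,j)\approx 0$) from beating the correct $i_0$ at each new cycle. Since $b[s]$ is reset to the current stage every time you pass through Case~1, the pool of such spoilers grows without bound, and nothing in your sketch rules out the learner oscillating forever through Case~1 $\to$ Case~2 $\to$ Case~3 $\to$ Case~1. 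The appeal to ``as in the proof of Theorem~\ref{from E_0 to InfEx}'' does not transfer: there the correct index $i_0$ eventually satisfies $f_{sim}(\alpha;i_0,s)\neq -1$ permanently, so once committed it is \emph{never} abandoned, independently of other candidates. Your relative ``beaten by $>1$'' criterion has no such absolute stability for $i_0$.

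The paper's remedy is exactly what is missing from your plan: it makes $b[s]$ play a double role, as both the index bound and an \emph{absolute} threshold for abandonment (abandon $i$ only when $p(i,s+1)>b[s]$), and whenever the pool is exhausted it raises $b$ to at least $\lfloor \min_j p(j,s+1)\rfloor+1$. One then argues that $b^\ast=\lim_s b[s]$ is finite: if not, eventually $b[s]\geq N_0+1$ with $N_0=L_{i_0}$, and from that point the sweep reaches $i_0$ and stays there forever because $p(i_0,s)\leq N_0<b[s]$ for all $s$ --- a fact that does not depend on any competing $j$. Replacing your relative margin with this growing absolute threshold closes the gap and gives the paper's proof.
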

\begin{proof}
 	Since $E_0$ is continuously reducible to $E_2$ (see Figure~\ref{fig:benchmark}), every $E_0$-learnable family is $E_2$-le\-arn\-able.

	Let $\mathfrak{K} := (\mathcal{A}_i)_{i\in\omega}$ be an $E_2$-learnable family. Let $\Gamma$ be an operator, which induces a continuous reduction from $LD(\mathfrak{K})_{/\cong}$ to $E_2$. For $i\in \omega$, we fix a real $\beta_i$ such that $\Gamma$ maps all copies of $\mathcal{A}_i$ into $[\beta_i]_{E_2}$.
	
	By Lemma \ref{continuous is computable with oracle}, there exist an oracle $X$ and a Turing operator $\Phi$ such that $\Gamma(\alpha) = \Phi^{X\oplus \alpha}$ for all $\alpha \in 2^{\omega}$.
	Consider the uniform join
	\[
		Y := X\oplus \bigoplus_{i\in\omega} \beta_i.
	\]
	
	\subsection*{Construction}
	We define a $Y$-computable operator $\Psi$.	
	For a real $\alpha$, we describe how to construct the real $\gamma_{\alpha} = \Psi(\alpha)$.
	For $s\in\omega$, by $\ell[s]$ we denote the greatest number such that for every $x\leq \ell[s]$, the value $\Phi^{(X\oplus \alpha)\upharpoonright s}(x)[s]$ is defined. Without loss of generality, one may assume that $\ell[s]$ is defined for every $s$.
	
	For $i, s\in\omega$, we consider the partial sum
	\[
		p(i,s) := \sum^{\ell[s]}_{ k=0} \frac{(\beta_i \triangle \Phi^{X\oplus \alpha})(k)}{k+1}.
	\]
	
	At a stage $s$, we define auxiliary values $i[s], b[s]\in \omega$ and $c[s] \in \{ 0,1\}$. Similarly to the proof of Theorem~\ref{from E_0 to InfEx}, these parameters control the flow of the construction. 
	Moreover, at each stage $s$, we set $\gamma_{\alpha}(s) := \beta_{i[s]}(s)$. Our construction will ensure that $i[s] \leq b[s]$ for every $s$.
	
	\subsubsection*{Stage 0} Set $i[0] = 0$, $b[0] = 1$, and $c[0] = 0$.
	
	\subsubsection*{Stage s+1} We assume that the parameters $b[s]$, $c[s]$, and $i[s]$ are already defined. Consider the following four cases:
	
	\subsubsection*{Case~1.} If $p(i[s], s+1) \leq b[s]$, then do not change anything.
	
	\subsubsection*{Case~2.} If $p(i[s],s+1) > b[s]$ and $c[s] = 0$, then put $i[s+1] := 0$ and $c[s] := 1$.
	\subsubsection*{Case~3.} Suppose that $p(i[s],s+1) > b[s]$, $c[s] = 1$, and $i[s] < b[s]$. Define $i[s+1] := i[s] + 1$.
	
	\subsubsection*{Case~4.} Suppose that $p(i[s],s+1) > b[s]$, $c[s] = 1$, and $i[s] = b[s]$. Find the least $i_0\leq b[s]+1$ such that 
	\[
		p(i_0, s+1) = \min\{ p(j,s+1)\,\colon j \leq b[s] +1 \}.
	\]
	We put $i[s+1] := i_0$, $c[s+1] := 0$, and 
	\[
		b[s + 1] := \max (b[s] + 1, \text{the integer part of } p(i_0,s+1) + 1).
	\]
	
	This concludes the description of the construction. It is clear that the operator $\Psi\colon \alpha \mapsto \gamma_{\alpha}$ is $Y$-computable.

\subsection*{Verification}	
	Suppose that a real $\alpha$ encodes a copy of the structure $\mathcal{A}_{i_0}$. We define:
	\[
		N_0 := \sum^{\infty}_{k=0} \frac{(\beta_{i_0} \triangle \Gamma(\alpha))(k)}{k+1}.
	\]
	
	\begin{claim}
		There exists a finite limit $b^{\ast} = \lim_s b[s]$. In addition, $b^{\ast} \geq i_0$.
	\end{claim}
	\begin{proof}
		We distinguish two cases. First, assume that $b[s] < i_0$ for all $s$. Then we have $i[s] < i_0$ for every $s$. Furthermore, since the sequence $b[s]$ is non-decreasing, there exists $b^{\ast} = \lim_s b[s]$ with $b^{\ast} < i_0$.
		
		Since $(\Gamma(\alpha)\ \cancel{E_2}\ \beta_j)$ for all $j\neq i_0$, there exists a stage $s_0$ such that $p(j,s_0) > i_0$ for all $j<i_0$, and $b[s] = b^{\ast}$ for all $s \geq s^{\ast}$. Then, our construction ensures that after the stage $s_0$, there will be a stage $s_1$ satisfying \emph{Case~4}. This implies that $b[s_1] \geq b^{\ast} +1$, which gives a contradiction. Thus, we deduce that there must exist a stage $s'_0$ such that $b[s'_0] \geq i_0$.
		
		Second, assume that $\lim_s b[s] = \infty$. This implies that there are infinitely many stages $s > s'_0$ satisfying \emph{Case~4}. Choose a stage $s_1 > s'_0$ such that $s_1$ satisfies \emph{Case~4} and $b[s_1] \geq N_0 + 1$. Consider the value $i^{\ast} := i[s_1]$.
		
\begin{itemize}
\item		If $i^{\ast} = i_0$, then for every $s$, we have $p(i^{\ast}, s) < b[s_1]$. This implies that every stage $s > s_1$ satisfies \emph{Case~1}, which gives a contradiction.
\item		If $i^{\ast} \neq i_0$, then find the least stage $s_2 > s_1$ with $p(i^{\ast}, s_2) > b[s_1]$. Then the stage $s_2$ satisfies \emph{Case~2}, and we have $c[s_2] = 1$. Therefore, \emph{Case~3} of the construction ensures that there is a sequence of stages 
\[		
		s_2 = s''_0 < s''_1 < \dots < s''_{i_0}
\]		
		 such that $i[s''_k] = k$ for every $k\leq i_0$. Again, every stage $s>s''_{i_0}$ satisfies \emph{Case~1}, which provides a contradiction.
\end{itemize}	
		Therefore, we proved that there is a finite limit $b^{\ast} = \lim_s b[s]$, and $b^{\ast} \geq i_0$.
	\end{proof}
	
	Now choose a stage $s^{\ast}$ such that $b[s^{\ast}] = b^{\ast}$. There exists a stage $s_1 \geq s^{\ast}$ such that every $i \leq b^{\ast}$ satisfies the following: if $i\neq i_0$, then $p(i,s_1) > b^{\ast}$. Since after the stage $s^{\ast}$, there are no stages satisfying \emph{Case~4}, it is not hard to deduce that for every $s \geq s_1 + b^{\ast} + 2$, we must have $i[s] = i_0$. 
	
	This implies that the real $\Psi(\alpha)$ is $E_0$-equivalent to $\beta_{i_0}$. For all $i\neq j$, we have $(\beta_i \ \cancel{E_2}\ \beta_j)$~--- clearly, this implies $(\beta_i \ \cancel{E_0}\ \beta_j)$. Hence, we conclude that our operator $\Psi$ provides a continuous reduction from $\mathrm{LD}(\mathfrak{K})_{/\cong}$ to $E_0$. In other words, the family $\mathfrak{K}$ is $E_0$-learnable, as desired.
\end{proof}


\section{Characterizing the learning power of $E_3$}
All equivalence relations considered so far (i.e., $E_0$, $E_1$, and $E_2$) are inseparable with respect to their learning power. In fact, by Theorem \ref{from E_0 to InfEx}, they don't expand the boundaries of our original framework. The case of $E_3$, to be discussed in this section, is different. Namely, $E_3$ has strictly more learning power than $E_0$---but this fact is only witnessed by infinite families. Recall that the equivalence relation $E_3$ is given by
	\[
(\alpha\rel{E_3}\beta) \Leftrightarrow		(\forall m\in\omega) (\alpha^{[m]} \rel{E_0} \beta^{[m]}).
	\]
   
	\begin{thm}\label{prop:E_03}
		A finite family $\mathfrak{K}$ is $E_3$-learnable if and only if $\mathfrak{K}$ is $E_0$-learnable. That is, $E_3$ and $E_0$ are $\mathrm{Learn}^{<\omega}$-equivalent.
	\end{thm}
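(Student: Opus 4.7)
The ($\Leftarrow$) direction is free: since $E_0 \leq_c E_3$ (Figure~\ref{fig:benchmark}), composing a continuous reduction to $E_0$ with the canonical continuous reduction $E_0 \to E_3$ turns any $E_0$-learnable family into an $E_3$-learnable one. The substantive direction is ($\Rightarrow$), and the plan is as follows. Suppose $\mathfrak{K} = (\mathcal{A}_i)_{i<n}$ is finite and $\Gamma : 2^\omega \to 2^\omega$ continuously reduces $\mathrm{LD}(\mathfrak{K})_{/\cong}$ to $E_3$. Fix, for each $i<n$, a real $\beta_i$ such that $\Gamma$ sends every copy of $\mathcal{A}_i$ into $[\beta_i]_{E_3}$. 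Since $\beta_i \, \cancel{E_3} \, \beta_j$ whenever $i\neq j$, the definition of $E_3$ yields, for every such pair, a column $m_{ij}\in\omega$ with $\beta_i^{[m_{ij}]} \, \cancel{E_0} \, \beta_j^{[m_{ij}]}$. Because there are only finitely many pairs, the set $M := \{ m_{ij} : i\neq j,\ i,j<n\}$ is \emph{finite}; this is the only place where finiteness of $\mathfrak{K}$ is used.

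I would then define a continuous operator $\Psi: 2^\omega \to 2^\omega$ by extracting and joining the $M$-indexed columns of $\Gamma(\alpha)$:
\[
\Psi(\alpha) := \bigoplus_{m \in M} \Gamma(\alpha)^{[m]}.
\]
Continuity of $\Psi$ is immediate, since column selection and a finite uniform join are computable operations composing continuously with $\Gamma$. The verification rests on the elementary fact that $E_0$ is preserved by finite uniform joins in both directions: for any finite tuples $(x_m)_{m\in M}$ and $(y_m)_{m\in M}$ of reals,
\[
\bigoplus_{m\in M} x_m \ E_0 \ \bigoplus_{m\in M} y_m \ \Longleftrightarrow \ (\forall m\in M)(x_m \ E_0 \ y_m),
\]
which follows because a finite union of cofinite sets is cofinite and conversely. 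Hence if $\alpha$ encodes a copy of $\mathcal{A}_{i_0}$, then $\Gamma(\alpha) \, E_3 \, \beta_{i_0}$ gives $\Gamma(\alpha)^{[m]} \, E_0 \, \beta_{i_0}^{[m]}$ for every $m\in M$, so $\Psi(\alpha) \, E_0 \, \Psi(\beta_{i_0})$. Conversely, if $i_0 \neq j_0$, the distinguishing column $m_{i_0 j_0}\in M$ forces $\Psi(\beta_{i_0}) \, \cancel{E_0} \, \Psi(\beta_{j_0})$. Thus $\Psi$ is a continuous reduction to $E_0$, so $\mathfrak{K}$ is $E_0$-learnable.

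I do not anticipate a genuine obstacle: once the right columns are chosen, the argument is a finite bookkeeping exercise. The informative feature is precisely the role of finiteness: for infinite $\mathfrak{K}$, a single distinguishing column is needed for \emph{each} pair from a family of size $\omega$, so $M$ becomes infinite, and an infinite uniform join of columns no longer preserves $E_0$ (it effectively reconstructs $E_3$). This is exactly why the analogous statement must fail for countable families, as shown in the subsequent Theorem~\ref{thm:E_03_for_countable}.
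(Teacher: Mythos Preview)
Your proof is correct and follows essentially the same approach as the paper: both fix, for each pair $i\neq j$, a distinguishing column $q(i,j)$ with $\beta_i^{[q(i,j)]}\ \cancel{E_0}\ \beta_j^{[q(i,j)]}$, then take the finite uniform join of these columns of $\Gamma(\alpha)$ to obtain a continuous reduction to $E_0$. The only cosmetic difference is that the paper indexes the join by pairs $(i,j)$ rather than by the set $M$ of distinct column indices, but this is immaterial.
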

	\begin{proof}
	One direction is again immediate: since $E_0$ is continuously reducible to $E_3$ (see Figure~\ref{fig:benchmark}), every $E_0$-learnable family is $E_3$-learnable.
	
	For the other direction, let $\mathfrak{K} := (\A_i)_{i\in\omega}$ be an $E_3$-learnable family and let $\Gamma$ induce a continuous reduction from $LD(\mathfrak{K})_{/\cong}$ to $E_3$. For  $i\leq n$, choose $\beta_i$ such that $\Gamma$ maps all copies of $\mathcal{A}_i$ into $[\beta_i]_{E_3}$.
	For each pair of indices $i\neq j$, we choose a number $q(i,j)$ such that 
	\[
		\beta_{i}^{[q(i,j)]}\ \cancel{E_0}\ \beta_j^{[q(i,j)]}.
	\]	
	
	Then, we define a Turing operator $\Psi \colon 2^{\omega} \to 2^{\omega}$ as follows.
	\[
		\Psi(\alpha) = \bigoplus_{i\neq j \leq n} \alpha^{[q(i,j)]}.
	\]
	The operator $\Phi := \Psi\circ \Gamma$ provides a continuous reduction from $LD(\mathfrak{K})_{/\cong}$ to $E_0$. Indeed, let $\alpha$ be a real which encodes a copy of $\mathcal{A}_{i_0}$. Then $(\Gamma(\alpha)\ E_3\ \beta_{i_0})$ and $(\Phi(\alpha) \ E_0\ \Psi(\beta_{i_0}))$. If $i\neq i_0$, then we have 
	\[
		(\alpha^{[q(i,i_0)]}\ E_0\ \beta_{i_0}^{[q(i,i_0)]}\ \cancel{E_0}\ \beta_i^{[q(i,i_0)]}) \text{ and } (\Phi(\alpha) \ \cancel{E_0}\ \Psi(\beta_{i})).
	\]
	Therefore, the family $\mathfrak{K}$ is $E_0$-learnable.
\end{proof}

Our next result separates $E_3$-learnability and $E_0$-learnability, thus proving that $E_0$ is strictly $\mathrm{Learn}^\omega$-reducible to $E_3$

\begin{thm}\label{thm:E_03_for_countable}
	There exists an infinite  family $\mathfrak{K} := (\mathcal{A}_i)_{i\in\omega}$ which is $E_3$-learnable, but not $E_0$-learnable.
\end{thm}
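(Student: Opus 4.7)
The plan is to exhibit the family $\mathfrak{K} = (\mathcal{A}_i)_{i\in\omega}$ of equivalence structures in which $\mathcal{A}_i$ consists of exactly one equivalence class of each size $k \geq i+1$ and nothing else. Since the $\mathcal{A}_i$ have pairwise distinct minimum class sizes, they are pairwise non-isomorphic.

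For $E_3$-learnability, for each $j \geq 1$ I would define a Turing operator $\Gamma_j \colon 2^\omega \to 2^\omega$ by setting $\Gamma_j(\alpha)(s) = 1$ iff one can exhibit $a_1 < \cdots < a_j \leq s$ pairwise $\sim$-equivalent in $\alpha \restriction_s$ with no further $b \leq s$ equivalent to $a_1$. A short verification, using that every class of $\mathcal{A}_i$ is finite, shows $\Gamma_j(\mathcal{A})\, E_0\, 1^\omega$ iff $\mathcal{A}$ has a class of size exactly $j$, and $\Gamma_j(\mathcal{A})\, E_0\, 0^\omega$ otherwise. Setting $\Gamma(\alpha)^{[j]} := \Gamma_j(\alpha)$ (and $\Gamma(\alpha)^{[0]} := 0^\omega$) produces a continuous $\Gamma$ for which $\Gamma(\mathcal{A})\, E_3\, \Gamma(\mathcal{B})$ holds on $\mathrm{LD}(\mathfrak{K})$ iff the sets of class sizes of $\mathcal{A}$ and $\mathcal{B}$ coincide, that is, iff $\mathcal{A} \cong \mathcal{B}$.

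For non-$E_0$-learnability, by Theorem~\ref{from E_0 to InfEx} it suffices to show that $\mathfrak{K}$ is not learnable. The key ingredient is a monotonicity lemma: for $i < j$, every $\Sigma^{\inf}_2$ sentence true in $\mathcal{A}_j$ is also true in $\mathcal{A}_i$. Granted this, Theorem~\ref{theorem:characterization learning} would supply $\Sigma^{\inf}_2$ formulas $(\phi_n)_{n\in\omega}$ with $\mathcal{A}_i \models \phi_n \iff i = n$; but then $\phi_1$ is satisfied in $\mathcal{A}_1$, and hence by monotonicity in $\mathcal{A}_0$, contradicting $\mathcal{A}_0 \models \phi_n$ only for $n = 0$.

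To establish the monotonicity lemma I would fix the natural substructure embedding $\iota \colon \mathcal{A}_j \hookrightarrow \mathcal{A}_i$ sending each class of $\mathcal{A}_j$ of size $k$ onto a class of $\mathcal{A}_i$ of the same size, and then induct on formula complexity, reducing to the case $\exists \bar{x}\, \xi(\bar{x})$ with $\xi \in \Pi^{\inf}_1$. Given a witness $\bar{a} \in \mathcal{A}_j$ (viewed in $\mathcal{A}_i$ via $\iota$), each universal conjunct $\forall \bar{y}\, \psi(\bar{a}, \bar{y})$ of $\xi$ is checked by, for any $\bar{y} \in \mathcal{A}_i$, constructing a twin $\bar{y}' \in \mathcal{A}_j$ realizing the same quantifier-free type as $\bar{y}$ over $\bar{a}$: elements of $\bar{y}$ inside $\iota(\mathcal{A}_j)$ are kept fixed, while elements lying in the ``extra'' classes of sizes $i+1, \dots, j$ of $\mathcal{A}_i$ are redirected into fresh classes of $\mathcal{A}_j$ (each of size $\geq j+1$) disjoint from the classes already used by $\bar{a}$ and the fixed part of $\bar{y}$. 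Since $\bar{a}$ intersects only finitely many classes and $\mathcal{A}_j$ has infinitely many sufficiently large classes, suitable disjoint targets are always available, and equalities together with both signs of $\sim$-equivalence are preserved. As $\psi$ is quantifier-free, $\psi(\bar{a}, \bar{y})$ holds in $\mathcal{A}_i$ iff $\psi(\bar{a}, \bar{y}')$ holds in $\mathcal{A}_j$, yielding the conclusion. The main obstacle is executing this back-and-forth cleanly enough to handle the case when $\bar{y}$ has elements in several extra classes simultaneously, and to keep the chosen images in $\mathcal{A}_j$ pairwise disjoint across those classes.
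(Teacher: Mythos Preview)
Your monotonicity lemma and the non-$E_0$-learnability argument are correct, but the $E_3$-learnability claim fails---in fact your family $\mathfrak{K}$ is \emph{not} $E_3$-learnable at all.

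The concrete error is in the assertion ``$\Gamma_j(\mathcal{A})\ E_0\ 0^\omega$ otherwise''. Suppose $j\leq i$, so every class of $\mathcal{A}_i$ has size $>j$. Take $j=1$ and $\mathcal{A}_1$. In \emph{any} copy, each class $C$ has a least-indexed element $m=\min(C)$; at stage $m$ this element is the unique visible member of $C$, hence a visible singleton. Since there are infinitely many classes, $\Gamma_1(\alpha)(s)=1$ for infinitely many $s$. Moreover one can arrange a copy of $\mathcal{A}_1$ in which a fresh class is started before the previous singleton acquires a second element, so that a visible singleton is present at \emph{every} stage and $\Gamma_1(\alpha)=1^\omega$; while the ``block'' copy (class of size $k$ occupying a block of $k$ consecutive integers) yields infinitely many stages with no visible singleton, hence infinitely many $0$'s in $\Gamma_1$. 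These two copies of $\mathcal{A}_1$ produce $\Gamma_1$-images that are not $E_0$-equivalent, so your $\Gamma$ is not a reduction to $E_3$.

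This is not repairable by tweaking $\Gamma_j$: your own monotonicity lemma, combined with the syntactic characterization of $E_3$-learnability (Theorem~\ref{thm:E_3-characterization}), shows $\mathfrak{K}$ is not $E_3$-learnable. Indeed, for any $\Sigma^{\inf}_2$ sentence $\theta$ the set $\{i:\mathcal{A}_i\models\theta\}$ is downward closed by monotonicity. Condition~(a) of Theorem~\ref{thm:E_3-characterization} would force this set to equal $\{i:\mathcal{A}_i\not\models\psi\}$ for some $\Sigma^{\inf}_2$ sentence $\psi$, which is upward closed; hence the set is $\emptyset$ or $\omega$, and then condition~(b) fails. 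What your example lacks is precisely a ``complementary'' $\Sigma^{\inf}_2$ sentence for each separating sentence. The paper's construction addresses exactly this point: it places, in each of infinitely many boxes, a copy of $\omega$ or of $\omega^\ast$, and the pair $\{\omega,\omega^\ast\}$ is separated in \emph{both} directions by $\Sigma^{\inf}_2$ sentences (``there is a least element'' versus ``there is a greatest element''), which is what makes condition~(a) go through.
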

\begin{proof}
	For the sake of exposition, first we give proof for the case, when the signature of the class $\mathfrak{K}$ is allowed to be infinite. After that, we provide comments on how to build the desired $\mathfrak{K}$ as a family of directed graphs.
	
	Consider signature $L = \{ R_j\,\colon j\in\omega \} \cup \{ \leq\}$, where $R_j$ are unary predicates. Given a real $\alpha$, we define an $L$-structure $\mathcal{D}(\alpha)$ as follows:
	\begin{itemize}
		\item Inside $\mathcal{D}(\alpha)$, the relations $R_j$, $j\in\omega$, are pairwise disjoint. We say that the set $R_j^{\mathcal{D}(\alpha)}$ is the \emph{$R_j$-box} of $\mathcal{D}(\alpha)$.
	 
	 	\item The $R_j$-box of $\mathcal{D}(\alpha)$ contains a linear order $L_j$ such that
	 	\[
	 		L_j \cong \begin{cases}
	 			\omega, & \text{if } \alpha(j) = 0,\\
	 			\omega^{\ast}, & \text{if } \alpha(j) = 1.
	 		\end{cases}
	 	\]
	\end{itemize}
	where $\omega$ and $\omega^*$ are respectively the order types of the positive and negative integers.
	For a finite string $\sigma\in 2^{<\omega}$, let $\mathcal{A}_{\sigma}$ be the structure $\mathcal{D}(\sigma \widehat{\ } 10^{\infty})$. Our family $\mathfrak{K}$ consists of all $\mathcal{A}_{\sigma}$, $\sigma\in 2^{<\omega}$.
	
	\begin{lemma}\label{lem:K_is_E3-learnable}
		The family $\mathfrak{K}$ is $E_3$-learnable.
	\end{lemma}
	\begin{proof}
		Recall that the family $\{ \omega, \omega^{\ast}\}$ is learnable, as they are  distinguishable by $\Sigma_2^{\inf}$ formulas~\cite[Theorem~3]{bazhenov2020learning}. By employing this fact, it is not hard to build a Turing operator $\Phi$, which acts as follows. Given a real $\alpha$, it treats $\alpha$ as a code for the atomic diagram of a countable partial order $\mathcal{L}$. Then:
		\begin{itemize}
			\item If $\mathcal{L}$ is a copy of $\omega$, then the output $\Phi(\alpha)$ is $E_0$-equivalent to $0^{\infty}$.
			
			\item If $\mathcal{L} \cong \omega^{\ast}$, then we have $(\Phi(\alpha)\ E_0\ 1^{\infty})$.
		\end{itemize}
	
	For each index $j\in\omega$, we define a Turing operator $\Psi_j$. Given a real $\alpha$, it treats $\alpha$ as a code of a countable $L$-structure $\mathcal{A}$. The output $\Psi_j(\alpha)$ encodes the partial order, which is contained inside the $R_j$-box of $\mathcal{A}$.
	
	Finally, we define an operator $\Theta$. For $\alpha\in 2^{\omega}$ and for $j,k\in\omega$, we set
	\[
		\Theta(\alpha)(\langle j,k\rangle) := (\Phi\circ \Psi_j(\alpha))(k).
	\]
	
	Observe the following. Let $\beta$ be a real. If a real $\alpha$ encodes a copy of the structure $\mathcal{D}(\beta)$, then for every $j\in\omega$, we have:
	\begin{itemize}
		\item if $\beta(j) = 0$, then the $j$-th column $(\Theta(\alpha))^{[j]}$ is $E_0$-equivalent to $0^{\infty}$;
		
		\item if $\beta(j) = 1$, then $(\Theta(\alpha))^{[j]}\ E_0 \ 1^{\infty}$.
	\end{itemize}
	This observation implies that the operator $\Theta$ witnesses the $E_3$-learna\-bi\-li\-ty of our family $\mathfrak{K}$. Lemma~\ref{lem:K_is_E3-learnable} is proved.
	\end{proof}
	
	Now, towards a contradiction, assume that the family $\mathfrak{K}$ is $E_0$-learnable. Then $\mathfrak{K}$ is \textbf{InfEx}-learnable, and by Theorem~\ref{theorem:characterization learning}, 
	 one can choose an infinitary $\Sigma_2$ sentence $\theta$ such that $\mathcal{A}_0 \models \theta$ and for every $\sigma \neq 0$, we have $\mathcal{A}_{\sigma} \not\models \theta$.
	
	Without loss of generality, one may assume that
	\[
		\theta = \exists \bar x\  \underset{i\in I}{\bigwedge\skipmm{6.8}\bigwedge}\ \forall \bar y_i \psi_i (\bar x, \bar y_i),
	\]
	where every $\psi_i$ is a quantifier-free formula. Fix a tuple $\bar c$ from the structure $\mathcal{A}_0$ such that
	\[
		\mathcal{A}_0 \models \underset{i\in I}{\bigwedge\skipmm{6.8}\bigwedge}\  \forall \bar y_i \psi_i (\bar c, \bar y_i).
	\]
	Choose a natural number $N$ such that for every $j\geq N$, the $R_j$-box of $\mathcal{A}_0$ does not contain elements from $\bar c$.
	
	Consider a string $\tau := 010^N$ and the corresponding structure $\mathcal{A}_{\tau} = \mathcal{D}(\tau \widehat{\ } 10^{\infty})$. It is clear that for every $j < N$, the (contents of the) $R_j$-boxes inside $\mathcal{A}_0$ and $\mathcal{A}_{\tau}$ are isomorphic. Therefore, one can choose a tuple $\bar d$ inside $\mathcal{A}_{\tau}$ as isomorphic copies of $\bar c$ (with respect to the isomorphism of the $R_j$-boxes, $j < N$).
	
	\begin{claim}\label{claim:same_E_sentences}
		The structures $(\mathcal{A}_0, \bar c)$ and $(\mathcal{A}_{\tau}, \bar d)$ satisfy the same $\exists$-sentences.
	\end{claim}
	\begin{proof}
		It is sufficient to establish the following. Every quantifier-free formula $\psi(\bar x, \bar y)$ satisfies 
		\[
			\mathcal{A}_0 \models \exists \bar y \psi(\bar c, \bar y)\  \ \Rightarrow\ \ \mathcal{A}_{\tau} \models \exists \bar y \psi(\bar d, \bar y).
		\]
		The other direction ($\Leftarrow$) can be obtained via a similar argument.
		
		Choose a tuple $\bar b$ from $\mathcal{A}_0$ such that $\mathcal{A}_0 \models \psi(\bar c, \bar b)$. Suppose that $\bar b = b_0, b_1, \dots, b_m$. 
		We define a new tuple $\bar b' = b'_0,b'_1,\dots,b'_m$ from $\mathcal{A}_{\tau}$ as follows:
		\begin{itemize}
			\item If $b_k$ lies in an $R_j$-box, which contains elements from $\bar c$, then $b'_k$ is defined as the copy of $b_k$ with respect to the natural isomorphism of $R_j$-boxes, $j < N$.
			
			\item Suppose that $b_k$ belongs to an $R_j$-box, which does not contain elements from $\bar c$. Then $b'_k$ can be chosen as any element from the $R_j$-box of $\mathcal{A}_{\tau}$, while preserving the ordering $\leq$. More formally, one needs to ensure the following: if $b_k \neq b_{\ell}$ both belong to this $R_j$-box, then we have:
			\[
				\mathcal{A}_0 \models b_k \leq b_{\ell}\ \Leftrightarrow\ \mathcal{A}_{\tau} \models b'_k \leq b'_{\ell}.
			\]
		\end{itemize}
		
		It is clear that the tuples $\bar c,\bar b$ and $\bar d,\bar b'$ satisfy the same atomic formulas. Therefore, we deduce that the structure $\mathcal{A}_{\tau}$ satisfies $\psi(\bar d, \bar b')$, and $\mathcal{A}_{\tau} \models \exists \bar y \psi(\bar d, \bar y)$.
	\end{proof}
	
	Claim~\ref{claim:same_E_sentences} implies that
	\[
		\mathcal{A}_{\tau} \models \underset{i\in I}{\bigwedge\skipmm{6.8}\bigwedge}\  \forall \bar y_i \psi_i (\bar d, \bar y_i),
	\]
	and hence, $\mathcal{A}_{\tau} \models \theta$, which contradicts the choice of $\theta$. We deduce that the family $\mathfrak{K}$ is not $E_0$-learnable. 
	
	\smallskip
	
	In order to obtain a family of directed graphs $\mathfrak{K}_{gr}$, which has the same properties as the family $\mathfrak{K}$, one can proceed as follows. Instead of distinguishing an $R_j$-box via the predicate $R_j$, one attaches to each element $a$ of the (intended) $R_j$-box its own cycle of size $(j+3)$. After that, the proof for the family $\mathfrak{K}_{gr}$ is essentially the same as the one provided above. Theorem~\ref{thm:E_03_for_countable} is proved.
\end{proof}


\subsection{A syntactic characterization of $E_3$-learnability}\label{section: characterization of E_3} As aforementioned, in the previous work we obtained a full syntactic characterization of which families of structures are learnable, by means of $\Sigma^{\mathrm{inf}}_2$ formulas (see Theorem~\ref{theorem:characterization learning}). The next theorem offers an analogous characterization for $E_3$-learning.

\begin{thm}\label{thm:E_3-characterization}
	Let $\mathfrak{K} := (\mathcal{A}_i)_{ i\in\omega}$ be a countable family. The family $\mathfrak{K}$ is $E_3$-learnable if and only if there exists a countable family of $\Sigma^{\mathrm{inf}}_2$ sentences $\Theta$ with the following properties:
	\begin{itemize}
		\item[(a)] if $\theta$ is a formula from $\Theta$, then there is a formula $\psi \in \Theta$ such that for every $\mathcal{A} \in \mathfrak{K}$,
		\[
			\mathcal{A} \models \theta\ \Leftrightarrow\ \mathcal{A} \models \neg \psi;
		\]
		
		\item[(b)] if $\mathcal{A}\not\cong \mathcal{B}$ are structures from $\mathfrak{K}$, then there is a sentence $\theta \in \Theta$ such that
		\[
			\mathcal{A} \models \theta \text{ and } \mathcal{B} \models \neg\theta.
		\]
	\end{itemize}
\end{thm}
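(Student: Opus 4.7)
The plan is to mirror Theorem~\ref{theorem:characterization learning} at the level of $E_3$: since an $E_3$-reduction decomposes into $\omega$ independent column-wise $E_0$-decisions, each such decision should correspond to one $\Sigma_2^{\inf}$ sentence. Condition~(a) encodes the symmetric two-sidedness of an $E_0$-test: distinguishing, in the $n$-th column, between two $E_0$-classes requires $\Sigma_2^{\inf}$ descriptions of \emph{both} sides. The argument has two directions.

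For $(\Leftarrow)$, enumerate $\Theta=(\theta_n)_{n\in\omega}$ and, using~(a), pick for each $n$ a $\psi_n\in\Theta$ with $\mathcal{A}\models\theta_n \iff \mathcal{A}\models\neg\psi_n$ on $\mathfrak{K}$. Fixing an oracle $X$ that codes the $\Sigma_2^{\inf}$ decompositions of all $\theta_n,\psi_n$, satisfaction of either in $\mathcal{A}$ becomes a $\Sigma_2^0$ condition in $X$ and the atomic diagram of $\mathcal{A}$, uniformly in $n$. For each $n$ I build a continuous $\Gamma_n$ by a $\Sigma_2$-approximation race: at stage $s$ compute the least $\Sigma_2$-witness $m_1(s)$ for $\theta_n$ surviving the first $s$ universal checks, and symmetrically $m_2(s)$ for $\psi_n$; output bit $1$ iff $m_1(s)<m_2(s)$ (with undefined values treated as $\infty$). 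On $\mathfrak{K}$ exactly one of $\theta_n,\psi_n$ holds, so the true sentence's least witness eventually stabilizes while every candidate witness for the false sentence is eventually refuted; hence $\Gamma_n(\mathcal{A})$ is $E_0$-equivalent to $1^\infty$ if $\mathcal{A}\models\theta_n$ and to $0^\infty$ otherwise. Setting $\Gamma(\mathcal{A})^{[n]}:=\Gamma_n(\mathcal{A})$ yields a continuous map; isomorphism-invariance of infinitary satisfaction gives $\mathcal{A}\cong\mathcal{B}\Rightarrow\Gamma(\mathcal{A})\rel{E_3}\Gamma(\mathcal{B})$, while (b) supplies, for each non-isomorphic pair, a sentence $\theta_n$ they disagree on and hence a column in which they are not $E_0$-equivalent.

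For $(\Rightarrow)$, let $\Gamma$ witness $E_3$-learnability and, via Lemma~\ref{continuous is computable with oracle}, write $\Gamma=\Phi^{X\oplus\cdot}$. Each column map $\Gamma_n:=(\Gamma(\cdot))^{[n]}$ is continuous, and because $\Gamma$ respects $E_3$, the $E_0$-class of $\Gamma_n(\mathcal{A})$ depends only on the isomorphism type of $\mathcal{A}\in\mathrm{LD}(\mathfrak{K})$. Enumerate the (countably many) attained column-$n$ classes as $(C_{n,k})_{k\in\omega}$ with representatives $\alpha_{n,k}\in C_{n,k}$. The predicate
\[
P_{n,k}(\mathcal{A})\ \equiv\ \Gamma_n(\mathcal{A})\rel{E_0}\alpha_{n,k}\ \equiv\ \exists m\,\forall \ell\geq m\,\bigl(\Phi^{X\oplus \mathcal{A}}(\langle n,\ell\rangle)=\alpha_{n,k}(\ell)\bigr)
\]
is $\Sigma_2^0$ in $X\oplus\alpha_{n,k}$ and the atomic diagram of $\mathcal{A}$, and it is isomorphism-invariant on $\mathrm{LD}(\mathfrak{K})$. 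Applying the Relativized Pullback Theorem (Theorem~\ref{Pullback Theorem}) along the $tc$-embedding extracted from $\Gamma$, following the same route as in~\cite{bazhenov2020learning}, $P_{n,k}$ is equivalent on $\mathrm{LD}(\mathfrak{K})$ to an $(X\oplus\alpha_{n,k})$-computable—hence $\Sigma_2^{\inf}$—sentence $\theta_{n,k}$. Set $\psi_{n,k}:=\bigvee_{k'\neq k}\theta_{n,k'}$, a countable disjunction of $\Sigma_2^{\inf}$ sentences and therefore itself $\Sigma_2^{\inf}$, and let $\Theta:=\{\theta_{n,k}\}\cup\{\psi_{n,k}\}$. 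Condition~(a) holds because each $\mathcal{A}\in\mathfrak{K}$ lies in exactly one column-$n$ class, so $\theta_{n,k}\equiv\neg\psi_{n,k}$ on $\mathfrak{K}$ (and conversely); condition~(b) holds because for $\mathcal{A}\not\cong\mathcal{B}$ some column $n$ distinguishes them, and the $\theta_{n,k}$ labelling the class of $\mathcal{A}$ separates them.

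The main obstacle is the translation, in $(\Rightarrow)$, from the $\Sigma_2^0$ predicate $P_{n,k}$ on atomic diagrams to an honest $\Sigma_2^{\inf}$ sentence: formulas of the latter kind have no access to specific numerals, while the atomic-diagram description refers to named elements. I expect to clear this exactly as in the proof of Theorem~\ref{theorem:characterization learning}: use $\Gamma$, together with its coding oracle, to package $\mathcal{A}$ as a $tc$-embedding into a canonical class of structures in which each column's $E_0$-class becomes an isomorphism invariant, so that $P_{n,k}$ is natively $\Sigma_2^{\inf}$ on the target and pulls back by Theorem~\ref{Pullback Theorem}. The delicate step is ensuring that the target codes are genuinely preserved by isomorphism (not merely by $E_3$-equivalence), which is precisely what forces the two-sidedness captured by condition~(a).
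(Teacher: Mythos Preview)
Your proposal is correct and follows essentially the same route as the paper: the $(\Leftarrow)$ direction via a column-wise $\Sigma_2$-race matches the paper's Lemma~\ref{pieces of gamma} almost verbatim, and for $(\Rightarrow)$ you correctly identify that the $\Sigma_2^0$ predicates $P_{n,k}$ must be converted to genuine $\Sigma_2^{\inf}$ sentences by first building a continuous embedding into a canonical target class and then invoking the Relativized Pullback Theorem. The paper carries out exactly this plan, making explicit the piece you leave as a gesture: the target class is $\mathfrak{C}_{st}$ (countable disjoint sums of the archetypal $\mathfrak{K}_{st}$-structures, with $Q_k$-boxes indexing columns and $P_j$-predicates indexing $E_0$-classes), and the embedding $\Xi$ is assembled column-by-column from operators $\Psi_{\vec{\beta}^{[j]}}$ that turn an $E_0$-class into an isomorphism type.
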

\begin{proof}
The proof of the theorem is inspired by ideas from  \cite{bazhenov2020learning}. In particular, we will adopt the technology of $tc$-embeddings and the Relativized Pullback Theorem reminded in the preliminaries. 	
	\smallskip

	Consider a signature $L_{st} = \{ \leq\} \cup \{ P_i \,\colon i\in\omega\}$, where $P_i$ are unary predicates. For an index $i\in\omega$, an $L$-structure $\mathcal{S}_i$ satisfies the following properties:
\begin{itemize}
\item  Inside $\mathcal{S}_i$, the relations $P_j$ are pairwise disjoint. In addition, if $x\in P_j$ and $y\in P_{k}$ for some $j\neq k$, then $x$ and $y$ are $\leq$-incomparable. Let $\eta$ be the order type of the rational numbers.
\item The predicate $P_i$ contains an isomorphic copy of $1+\eta$.
\item Every $P_j$, for $j\neq i$, contains a copy of $\eta$.
\end{itemize}	
	   
	The class $\mathfrak{K}_{st}$ consists of all structures $\mathcal{S}_i$ with $i\in\omega$.
	
	 In \cite{bazhenov2020learning}, it is  shown $\mathfrak{K}_{st}$ is an archetypical $E_0$-learnable  family, in the sense that  a  countable family $\mathfrak{C}$ is learnable if and only if there is a continuous embedding from the class $\mathfrak{C}$ into $\mathfrak{K}_{st}$.

	\smallskip
	
	For dealing with $E_3$-learnability, we have to introduce a new, and more complicated, class $\mathfrak{C}_{st}$. But the informal idea behind $\mathfrak{C}_{st}$ is pretty simple: roughly speaking, this class contains all countable disjoint sums of the structures from $\mathfrak{K}_{st}$.
	
	Consider a new signature $L_1 = L_{st} \cup \{ Q_k\,\colon k\in\omega\}$, where $Q_k$ are unary predicates. The class $\mathfrak{C}_{st}$ contains all $L$-structures $\mathcal{M}$, which satisfy the following properties:
	\begin{itemize}
		\item Their relations $Q_k$, $k\in\omega$, are pairwise disjoint. We say that the $L_{st}$-substructure with domain $\mathcal{M}\upharpoonright Q_k$ is the \emph{$Q_k$-box} of $\mathcal{M}$.
		
		\item Every $Q_k$-box of $\mathcal{M}$ is isomorphic to a structure from the class $\mathfrak{K}_{st}$.
	\end{itemize}
	Note that our class $\mathfrak{C}_{st}$ has cardinality $2^{\aleph_0}$.
	
	\begin{lemma}\label{lem:C_st_has_a_good_family}
		The class $\mathfrak{C}_{st}$ has a computable family of $\Sigma^{\mathrm{inf}}_2$ sentences $\Theta$, which satisfies properties~(a) and~(b) from the formulation of Theorem~\ref{thm:E_3-characterization}.
	\end{lemma}
	\begin{proof}
		The desired family $\Theta$ contains the following $\Sigma^{\mathrm{inf}}_2$ sentences:
		\begin{enumerate}
			\item For each $i$ and $j$, we add a finitary $\Sigma_2$ sentence $\theta_{i,j}$, which states the following: ``the $P_j$-predicate inside the $Q_i$-box has a $\leq$-least element''.
			
			\item For each $i$ and $j$, we add a $\Sigma^{\mathrm{inf}}_2$ sentence $\psi_{i,j}$, which is equivalent to the following formula:
			\[
				\underset{k\neq j}{\bigvee\skipmm{6.8}\bigvee}\  \theta_{i,k}.
			\]
			In other words, there is some $k\neq j$ such that the $P_k$-predicate inside the $Q_i$-box possesses the least element.
		\end{enumerate}
		
		Let $\mathcal{M}$ be an arbitrary structure from $\mathfrak{C}_{st}$. Since the $Q_i$-box of $\mathcal{M}$ is a structure from $\mathfrak{K}_{st}$, it is not hard to show that
		\[
			\mathcal{M} \models \theta_{i,j}\ \Leftrightarrow\ \mathcal{M} \models \neg \psi_{i,j}.
		\]
		Hence, we deduce that the class $\mathfrak{C}_{st}$ satisfies property~(a) of Theorem~\ref{thm:E_3-characterization}.
		
		Suppose that $\mathcal{M} \not\cong \mathcal{N}$ are structures from $\mathfrak{C}_{st}$. Then there exist indices $i$ and $j$ such that for the structures $\mathcal{M}$ and $\mathcal{N}$, their $P_j$-predicates inside $Q_i$-boxes are not isomorphic. Without loss of generality, one may assume that in this $P_j$-place, $\mathcal{M}$ has order-type $1+\eta$, and $\mathcal{N}$ has order-type $\eta$. Then, it is clear that
		\[
			\mathcal{M}\models \theta_{i,j} \& \neg \psi_{i,j}\ \text{ and } \mathcal{N}\models \neg\theta_{i,j} \& \psi_{i,j}.
		\]
		Therefore, $\mathfrak{C}_{st}$ satisfies property~(b) of the theorem.
	\end{proof}
	
	\smallskip
	
	The rest of the proof for the direction~($\Rightarrow$) is devoted to building a continuous embedding from the given class $\mathfrak{K}$ to $\mathfrak{C}_{st}$. This embedding allows us to apply the Relativized Pullback Theorem (Theorem~\ref{Pullback Theorem}) for finishing our argument.
	
	\medskip
	
	Consider a countable sequence of reals $\vec{\gamma} = (\gamma_i)_{i\in\omega}$.	
	We define an auxiliary continuous operator $\Psi_{\vec{\gamma}}$ as follows. Given a real $\alpha$, our operator $\Psi_{\vec{\gamma}}$ produces a new real $\delta_{\alpha}$, which encodes the atomic diagram of an $L$-structure $\mathcal{S}(\alpha)$.
	
	We always assume that inside $\mathcal{S}(\alpha)$: 
	\begin{itemize}
		\item all predicates $P_i$ are disjoint;
		
		\item every predicate $P_i$ contains at least one element;
		
		\item the domain of $\mathcal{S}(\alpha)$ equals $\omega$.
	\end{itemize}

\subsection*{Construction}
	The construction of $\mathcal{S}(\alpha)$ proceeds in stages. At a stage $s$, for each $i\in\omega$, we define the following auxiliary value:
	\[
		v(i,s) = \begin{cases}
			\min \{ t\leq s\,\colon (\forall x)[ t\leq x \leq s \rightarrow \alpha(x) = \beta_i(x) ]\}, & \text{if } \alpha(s) = \beta_i(s),\\
			\infty, & \text{otherwise}.
		\end{cases}
	\]	
	We also define two parameters $p(s)$ and $b(s)$. Roughly speaking, at a stage $s$, our current ``guess'' is that the input real $\alpha$ is $E_0$-equivalent to $\beta_{p(s)}$, where $p(s) \leq b(s) \leq s$.
	
\subsubsection*{Stage $0$} Put $p(0) = 0$ and $b(s) = 0$. 

\subsubsection*{Stage $s+1$} Consider the following two cases:
	
\subsubsection*{Case~1.} Suppose that there is an index $i \leq s+1$ such that $\alpha(s+1) = \beta_i(s+1)$. 
	
	If $v(p(s), s+1) \neq \infty$, then set $i_0 := p(s)$. Otherwise, $i_0$ is defined as follows.
	\begin{itemize} 
		\item If $p(s) < b(s)$, then $p(s+1) := p(s) + 1$ and $i_0 := p(s) + 1$;
		
		\item If $p(s) = b(s)$, then we define $i_0$ as the least index such that $i_0 \leq s+1$ and 
		\[
			v(i_0,s+1) = \min\{ v(j, s+1) \,\colon j\leq s+1 \}.
		\]
		We set $b(s+1) := s + 1$ and $p(s+1) := i_0$.
	\end{itemize}
	
	Suppose that the relation $P_{i_0}$ (at this particular moment) contains the following linear order:
	$a_0 < a_1 < \dots < a_k$.
	We choose fresh elements $b_0,b_1,\dots,b_k$, add them into $P_{i_0}$, and set:
	\[
		a_0 < b_0 < a_1 < b_1 < \dots < a_k < b_k.
	\]
	
	Consider an index $j\neq i_0$, and suppose that the relation $P_j$ contains the ordering
	$c_0 < c_1 < \dots < c_{\ell}$.
	Choose fresh elements $d_{-1},d_0,d_1,\dots,d_{\ell}$, put them into $P_j$, and define:
	\[
		d_{-1} < c_0 < d_0 < c_1 < d_1 < \dots < c_{\ell} < d_{\ell}.
	\]
	
	\subsubsection*{Case~2} If $\alpha(s) \neq \beta_i(s)$ for all $i \leq s+1$, then for every $j\in\omega$, the relation $P_j$ is arranged in the same way as described in \emph{Case~1}.
	
	This concludes the description of the operator $\Psi_{\vec{\gamma}}$. 
	
	\subsection*{Verification} Similarly to the previous proofs, it is not hard to verify the following properties of $\Psi_{\vec{\gamma}}$:

		\begin{enumerate}
			\item The operator $\Psi_{\vec{\gamma}}$ is $\big( \bigoplus_{i\in\omega} \gamma_i \big)$-com\-pu\-table.
			
			\item If $(\alpha\ E_0\ \gamma_i)$ for some $i\in\omega$, then the structure $\mathcal{S}(\alpha)$ is isomorphic to $\mathcal{S}_i$.
		\end{enumerate}

	Now, let $\Gamma$ be a continuous operator which induces a reduction from $\mathrm{LD}(\mathfrak{K})_{/\cong}$ to $E_3$. For a structure $\mathcal{A}_i$ from $\mathfrak{K}$, fix a real $\beta_i$ such that $\Gamma$ maps all copies of $\mathcal{A}_i$ into the class $[\beta_i]_{E_3}$.

	We define a continuous operator $\Xi$ as follows. Let $\alpha$ be a real.
	\begin{enumerate}
		\item First, we produce the real $\Gamma(\alpha)$.
		
		\item Second, for each $j\in\omega$, we consider the sequence $\vec{\beta}^{[j]} := (\beta_i^{[j]})_{i\in\omega}$. We compute the reals
		\[
			\delta_{\alpha,j} := \Psi_{\vec{\beta}^{[j]}} ( (\Gamma(\alpha))^{[j]}).
		\]
		
		\item Finally, by using the reals $\delta_{\alpha,j}$, $j\in\omega$, we recover a new real $\delta$, which encodes the atomic diagram of an $L_1$-structure $\mathcal{M}$. This structure $\mathcal{M}$ is defined as follows. For each $j$, the $Q_j$-box of $\mathcal{M}$ is an isomorphic copy of the $L_{st}$-structure encoded by $\delta_{\alpha,j}$, and this copy has domain $\{ \langle j, k\rangle\,\colon k \in\omega\}$. We set $\Xi(\alpha) := \delta$.
	\end{enumerate}
	
	It is straightforward to establish the following: the operator $\Xi$ is a continuous embedding from the class $\mathfrak{K}$ into a countable subclass of $\mathfrak{C}_{st}$.
So, by applying the Relativized Pullback Theorem (Theorem~\ref{Pullback Theorem}) to the continuous embedding $\Xi$, we recover a countable family of formulas with the desired properties.  Indeed, the following holds:
\begin{itemize}
 \item by Lemma \ref{lem:C_st_has_a_good_family}, $\mathfrak{C}_{st}$  has a  family of $\Sigma^{\mathrm{inf}}_2$ sentences $\Theta$, which satisfies properties~(a) and~(b) of Theorem~\ref{thm:E_3-characterization};
 \item   by Lemma \ref{continuous is computable with oracle}, $\Phi$ is equivalent to a Turing $X$-operator, for a suitable oracle $X$. 
\end{itemize}
Hence, we can apply Theorem~\ref{Pullback Theorem}, and deduce that $\mathfrak{K}$ has a  family $\Theta^\star$ of $\Sigma^{\mathrm{inf}}_2$ sentences $\Theta$, which satisfies (a) and~(b) of Theorem~\ref{thm:E_3-characterization}, as desired.
	
	\medskip
	
$(\Leftarrow)$.	This direction essentially follows from previous results.  Assume that $\mathfrak{K}:=(\A_i)_{i\in\omega}$ has a family $\Theta$ of $\Sigma_2^{\inf}$ formulas which satisfies the properties (a) and (b) of the theorem. Then it's easy to check that the formulas of $\Theta$ can be arranged to satisfy the following lemma:

\begin{lemma}\label{lem: collection of pairs}
There is a collection of pairs   of formulas $(\rho_{i_0},\rho_{i_1})_{i\in\omega}$ so that, for all structures $\A$ and $\mathcal{B}$ from $\mathfrak{K}$,
\begin{enumerate}
\item $\bigcup_{i\in\omega} \{\rho_{i_0},\rho_{i_1}	\}=\Theta$;
\smallskip

\item for all  $i\in\omega$, $\A$ satisfies exactly one formula between $\rho_{i_0}$ and $\rho_{i_1}$;
\smallskip

\item if $\A\not\cong\mathcal{B}$, then, for some $j\in\omega$,
\[
\A \models \rho_{j_0}  \Leftrightarrow \mathcal{B} \models \rho_{j_1}.
\]
\end{enumerate}
\end{lemma}
The next lemma combines (a limited case of) Theorem~\ref{theorem:characterization learning} with Theorem~\ref{from E_0 to InfEx}.

\begin{lemma}\label{pieces of gamma}
For all $i$, there is a continuous operator $\Gamma_i : 2^\omega \to 2^\omega$ such that, for all structures $\mathcal{S}\in\mathrm{LD}(\mathfrak{K})$,
\begin{itemize}
\item if $\mathcal{S}\models \rho_{i_0}$, then $\Gamma_i(\mathcal{S})\rel{E_0} 0^\infty$;
\item if $\mathcal{S}\models \rho_{i_1}$, then $\Gamma_i(\mathcal{S})\rel{E_0} 1^\infty$.
\end{itemize}
\end{lemma}

\begin{proof}
The proof is similar to that of the direction $(2)\Rightarrow (1)$ of \cite[Theorem~3]{bazhenov2020learning}. Let $i\in\omega$. For $k\in\{0,1\}$, without loss of generality assume that
\[
\rho_{i_k} :=(\exists \bar{x}) \underset{j\in J_{i_k}}{\bigwedge\skipmm{6.8}\bigwedge}\  \forall \bar{y} \phi_{i_k,j}(\bar{x},\bar{y}).
\]
For a finite structure $\mathcal{F}$, say that $\phi_{i_k}$ is \emph{$\mathcal{F}$-compatible} via tuple $\bar{a}$ if within the domain of $\mathcal{F}$ there is no pair $(j,\bar{b})$ with $j\in J_{i_k}$ such that $\mathcal{F}\models \neg \phi_{i_k,j}(\bar{a},\bar{b})$. 

\smallskip

\subsection*{Construction}
Now, let $\alpha$ be a real. Denote by $\mathcal{F}_{\alpha\restriction_s}$ the finite structure (in the signature of $\mathfrak{K}$) encoded by the initial segment $\alpha\restriction_s$ of $\alpha$.
The continuous operator $\Gamma_i$ is defined by stages. 

\subsubsection*{Stage $0$} Let $\Gamma_i(\alpha)(0):=0$ and $\Gamma_i(\alpha)(1):=1$. 

\subsubsection*{Stage s+1} At this stage, we define $\Gamma_i(\alpha)(2s)$ and $\Gamma_i(\alpha)(2s+1)$. To this end, we  distinguish three cases:
\begin{enumerate}
\item There is a tuple $\bar{c}$ so that $\phi_{i_0}$ is $\mathcal{F}_{\alpha \restriction_s}$-compatible via $\bar{c}$, and $\phi_{i_1}$ is not $\mathcal{F}_{\alpha \restriction_s}$-compatible for all tuples $<\bar{c}$. If so, let $\Gamma_i(\alpha)(2s)=\Gamma_i(\alpha)(2s+1):=0$;
\item There is a tuple $\bar{c}$ so that $\phi_{i_1}$ is $\mathcal{F}_{\alpha \restriction_s}$-compatible via $\bar{c}$, and $\phi_{i_0}$ is not $\mathcal{F}_{\alpha \restriction_s}$-compatible for all tuples $\leq \bar{c}$. If so, let $\Gamma_i(\alpha)(2s)=\Gamma_i(\alpha)(2s+1):=1$;
\item If neither of the above cases hold, then let $\Gamma_i(\alpha)(2s):=0$ and $\Gamma_i(\alpha)(2s+1):=1$.
\end{enumerate}

\subsection*{Verification} The continuity of $\Gamma_i$ immediately follows from the construction. Next, suppose that $\beta\in 2^\omega$ encodes a copy of a structure $\mathcal{S}\in \mathfrak{K}$. By Lemma 
\ref{lem: collection of pairs},  $\mathcal{S}$ satisfies exactly one formula between $\phi_{i_0}$ and $\phi_{i_1}$; without loss of generality, assume that $\mathcal{S}\models \phi_{i_1}$.   This means that there is a tuple $\bar{c}$ and a stage $t_0$ so that $\phi_{i_1}$ is $\mathcal{F}_{\beta \restriction_t}$-compatible via $\bar{c}$, for all $t\geq t_0$. On the other hand, since $\mathcal{S}\not\models \phi_{i_0}$, it must be the case that for all tuples $\bar{d}$ (and, in particular, all tuples $\leq\bar{c}$), there must be a stage $t_1$ so that, for all $t\geq t_1$, $\phi_{i_0}$ is not $\mathcal{F}_{\beta \restriction_t}$-compatible. So, for all sufficiently large $x$, $\Gamma_i(\beta)(x)$ is defined by performing action $(2)$ above.  Thus, $\Gamma_i (\beta)$ is $E_0$-equivalent to $1^\infty$, as desired.
 \end{proof}

We can now construct a continuous reduction from $LD(\mathfrak{K})_{/\cong}$ to $E_3$ by merging  the operators $\Gamma_i$'s as follows:
\[
\Gamma(\alpha)(\langle i, x \rangle):=\Gamma_i(\alpha)(x).
\]
It is an easy consequence of Lemma~\ref{pieces of gamma} that, if $\beta_0$ and $\beta_1$ are copies of the same structure $\mathcal{S}\in \mathfrak{K}$, then $\Gamma(\beta_0)\mathrel{E_3} \Gamma(\beta_1)$.   
To deduce that $\Gamma$   is the desired reduction, suppose that  $\beta_0$ and $\beta_1$ are copies of nonisomorphic  structures  $\A$ and $\mathcal{B}$ from $\mathfrak{K}$. By Lemma~\ref{lem: collection of pairs}, there are $j\in\omega$ and $k\in\{0,1\}$ so that $\A \models \phi_{i_k}$ and $\mathcal{B}\models \phi_{i_{1-k}} $. But then, by Lemma~\ref{pieces of gamma}, it follows that $\Gamma(\beta_0)$ and $\Gamma(\beta_1)$ differ on the $j$th column, that is,
\[
{\Gamma(\beta_0)}^{[j]} \mathrel{E_0} k^{\infty} \mbox{ but }  {\Gamma(\beta_1)}^{[j]} \mathrel{E_0} (1-k)^{\infty}.
\]
Thus, $\Gamma(\beta_0) \; \cancel{\mathrel{E_3}} \; \Gamma(\beta_1)$.

\smallskip

This concludes the proof of Theorem~\ref{thm:E_3-characterization}.
\end{proof}


\section{Learning with the help of $Z_0$ and $E_{set}$}
We conclude our examination of the learning power of combinatorial Borel equivalence relations by briefly focusing on two further examples: $Z_0$ and $E_{set}$. Here, the main goal is to finally individuate a Borel equivalence relation which is able to learn a finite family   beyond the reach of our original framework.

\subsection{$Z_0$-learning} 
Before proceeding to a new result, we give a simple useful fact.
Let $\alpha$ and $\beta$ be reals, and let $s\in\omega$. We use the following notation:
\[
	dn(\alpha,\beta;s) = \frac{\mathrm{card}(\{ i\leq s\,\colon \alpha\triangle\beta(i) = 1\})}{s+1}.
\] 
Recall that the equivalence relation $Z_0$ is given by
\[
(\alpha \mathrel{Z_0} \beta)\Leftrightarrow\lim_{k\to \infty} dn(\alpha,\beta;k)=0
\]

\begin{lemma}\label{lem:Z_0}
	Suppose that $(\alpha\ Z_0\ \beta)$ and $(\alpha\ \cancel{Z_0}\ \gamma)$. Then
	\[
		\limsup \!_s\, dn(\alpha,\gamma;s) = \limsup \!_s\, dn(\beta,\gamma;s).
	\]
\end{lemma}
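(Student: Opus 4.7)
The plan is to bound $|dn(\alpha,\gamma;s) - dn(\beta,\gamma;s)|$ pointwise by $dn(\alpha,\beta;s)$ and then invoke the hypothesis $(\alpha\ Z_0\ \beta)$ to conclude. First I would record the basic combinatorial observation: at each coordinate $i$, the two symmetric differences $(\alpha\triangle\gamma)(i)$ and $(\beta\triangle\gamma)(i)$ coincide whenever $\alpha(i) = \beta(i)$, and otherwise differ by exactly one. This is just a case check on the four possibilities for $(\alpha(i),\beta(i))$ once $\gamma(i)$ is fixed.

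Next I would translate that pointwise remark into counting form. For any $s\in\omega$, summing over $i\leq s$ gives
\[
\bigl|\,\mathrm{card}\{ i \leq s : (\alpha\triangle\gamma)(i) = 1\} - \mathrm{card}\{ i \leq s : (\beta\triangle\gamma)(i) = 1\}\,\bigr|\ \leq\ \mathrm{card}\{ i \leq s : (\alpha\triangle\beta)(i) = 1\},
\]
and dividing through by $s+1$ yields the uniform bound
\[
|\,dn(\alpha,\gamma;s) - dn(\beta,\gamma;s)\,|\ \leq\ dn(\alpha,\beta;s) \quad \text{for every } s\in\omega.
\]

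The final step is routine real analysis. The hypothesis $(\alpha\ Z_0\ \beta)$ says precisely that $\lim_s dn(\alpha,\beta;s) = 0$, so the sequences $dn(\alpha,\gamma;s)$ and $dn(\beta,\gamma;s)$ differ by a null sequence. Since $\limsup$ is invariant under such perturbations, we obtain $\limsup_s dn(\alpha,\gamma;s) = \limsup_s dn(\beta,\gamma;s)$, as required.

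I do not anticipate any substantive obstacle: the argument uses only a triangle-type inequality for symmetric differences plus the definition of $Z_0$. The assumption $(\alpha\ \cancel{Z_0}\ \gamma)$ is not actually needed to derive the equality; its role is presumably to guarantee that the common value of the two $\limsup$s is strictly positive, which is what will matter when the lemma is applied later.
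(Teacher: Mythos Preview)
Your proof is correct and rests on the same triangle-type inequality the paper uses, namely that the count of positions where $\alpha\triangle\gamma$ and $\beta\triangle\gamma$ disagree is bounded by the count of positions in $\alpha\triangle\beta$. The paper's argument is more roundabout: it fixes $r=\limsup_s dn(\beta,\gamma;s)$, an $\varepsilon\in(0,r)$, and an auxiliary integer $N$, picks a subsequence $(s_j)$ along which $dn(\beta,\gamma;s_j)>r-\varepsilon$, and then uses the one-sided version of your inequality to show $dn(\alpha,\gamma;s_j)>(r-\varepsilon)\cdot\frac{N-1}{N}$; letting $N\to\infty$ and then $\varepsilon\to 0$ gives one inequality, with the other implicit by symmetry. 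Your packaging---bounding $|dn(\alpha,\gamma;s)-dn(\beta,\gamma;s)|$ directly and invoking the invariance of $\limsup$ under null perturbations---is shorter and avoids the $\varepsilon$--$N$ machinery entirely, while also making transparent (as you note) that the hypothesis $(\alpha\ \cancel{Z_0}\ \gamma)$ plays no role in the equality itself.
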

\begin{proof}
	Let $r := \limsup_s dn(\beta,\gamma;s)$. It is sufficient to show that for any $\varepsilon$ such that $0 < \varepsilon < r$, we have
	\[
		\limsup\!_s\, dn(\alpha,\gamma;s) \geq r-\varepsilon.
	\]
	Define $q := r - \varepsilon$.

	Let $N$ be a non-zero natural number. Fix a number $s_0$ such that $dn(\alpha,\beta;s) < \frac{q}{N}$ for all $s\geq s_0$. 
	
	There exists a sequence $(s_j)_{j\in\omega}$, where $s_0 < s_1 <s_2 < \dots$, such that $dn(\beta,\gamma; s_j) > q$ for all $j$. 
	
	Note that every $s$ satisfies the following:
	\begin{multline*}
		\mathrm{card}(\{ i\leq s\,\colon \alpha \triangle \gamma(i) = 1\}) \geq\\ 
		\geq \mathrm{card}(\{ i\leq s\,\colon \beta \triangle \gamma(i) = 1\}) - \mathrm{card}(\{ i\leq s\,\colon \beta \triangle \alpha(i) = 1\}).
	\end{multline*}
	Hence, we have:
	\[
		dn(\alpha,\gamma;s_j) \geq dn(\beta, \gamma;s_j) - dn(\alpha, \beta; s_j) > q - \frac{q}{N} = q\cdot \frac{N-1}{N}.
	\]
	
	Since $N$ was chosen as an arbitrary natural number, we deduce that for any $\delta > 0$, we have $\limsup_s dn(\alpha,\gamma; s) > q - \delta$. This implies \[
	\limsup \!_s\, dn(\alpha,\gamma; s) \geq q. 
\]
Lemma~\ref{lem:Z_0} is proved.
\end{proof}

\medskip

We show that learnability by finite families cannot distinguish between $E_0$ and $Z_0$:

	\begin{thm}\label{prop:Z_0_finite}
		A finite family $\mathfrak{K}$ is $Z_0$-learnable if and only if $\mathfrak{K}$ is $E_0$-learnable. That is, $Z_0$ and $E_0$ are $\mathrm{Learn}^{<\omega}$-equivalent.
	\end{thm}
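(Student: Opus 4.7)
The forward direction is immediate from Figure~\ref{fig:benchmark}: composing a continuous reduction of $\mathrm{LD}(\mathfrak{K})_{/\cong}$ to $E_0$ with a continuous reduction of $E_0$ to $Z_0$ shows every $E_0$-learnable family is $Z_0$-learnable.

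For the converse, the plan is to produce a learner for $\mathfrak{K}$ and then invoke Theorem~\ref{from E_0 to InfEx} to deduce $E_0$-learnability. Suppose $\mathfrak{K} = \{\mathcal{A}_0, \dots, \mathcal{A}_n\}$ is $Z_0$-learnable via a continuous operator $\Gamma$, and, for each $i \leq n$, pick a real $\beta_i$ so that $\Gamma$ maps every copy of $\mathcal{A}_i$ into $[\beta_i]_{Z_0}$. Since $\mathfrak{K}$ is finite and the $\beta_i$'s are pairwise $Z_0$-inequivalent, one can choose a positive rational
\[
    r < \tfrac{1}{3} \min\{ \limsup\!_s\, dn(\beta_i,\beta_j;s) \,\colon i \neq j \leq n \}.
\]
By Lemma~\ref{continuous is computable with oracle}, fix an oracle $X$ that computes each $\beta_i$ and a Turing operator $\Phi$ with $\Gamma(\alpha) = \Phi^{X \oplus \alpha}$; define $\ell[s]$ as in the proof of Theorem~\ref{from E_0 to InfEx} and set $d_i(\alpha;s) := dn(\Phi^{(X\oplus\alpha)\upharpoonright s}, \beta_i; \ell[s])$.

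I then build $\mathbf{M}$ by a cyclic strategy: start with guess $i[0] = 0$; at stage $s+1$, set $i[s+1] := (i[s]+1) \bmod (n+1)$ if $d_{i[s]}(\alpha;s+1) > r$ and keep the guess otherwise; output $\mathbf{M}(\alpha \upharpoonright_s) := \ulcorner \mathcal{A}_{i[s]} \urcorner$. For verification, fix $\alpha$ encoding a copy of some $\mathcal{A}_{i_0}$. Since $\Gamma(\alpha) \rel{Z_0} \beta_{i_0}$, one has $d_{i_0}(\alpha;s) \to 0$, so $d_{i_0}(\alpha;s) < r$ from some stage $s_0$ onwards; and by Lemma~\ref{lem:Z_0}, for every $i \neq i_0$,
\[
    \limsup\!_s\, d_i(\alpha;s) \ =\ \limsup\!_s\, dn(\beta_{i_0},\beta_i;s)\ >\ 3r,
\]
so any incorrect guess is abandoned within finitely many stages. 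Hence the guess cycles through $\{0,\dots,n\}$ until it lands on $i_0$ at some stage $\geq s_0$, after which it remains forever. Thus $\mathbf{M}$ learns $\mathfrak{K}$, and Theorem~\ref{from E_0 to InfEx} delivers $E_0$-learnability.

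The main obstacle, and the reason the argument uses both finiteness and Lemma~\ref{lem:Z_0}, is the asymmetry between $\limsup$ and $\liminf$ for $Z_0$: because $\liminf_s dn(\beta_{i_0},\beta_i;s)$ may vanish for $i\neq i_0$, no single stage suffices to rule out a wrong hypothesis. Lemma~\ref{lem:Z_0} compensates by transferring the positive $\limsup$-separation among the $\beta_j$'s to $\Gamma(\alpha)$, and finiteness of $\mathfrak{K}$ ensures that the infimum of these separations remains positive, which permits a uniform threshold~$r$. This is precisely where the proof would fail for infinite families, explaining the restriction to the $\mathrm{Learn}^{<\omega}$-setting.
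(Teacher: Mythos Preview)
Your proof is correct and rests on the same key idea as the paper's: fix a uniform positive threshold below the pairwise $\limsup$-separations of the $\beta_i$'s (which exists precisely because $\mathfrak{K}$ is finite), and use Lemma~\ref{lem:Z_0} to transfer that separation from the $\beta_i$'s to $\Gamma(\alpha)$. The implementations differ slightly. The paper builds a continuous reduction to $E_0$ directly: at each stage $s$ it computes, for every $i\leq n$, the count $m_i[s]$ of indices $t\leq \ell[s]$ with $dn(\Gamma(\alpha),\beta_i;t)>q_0$, selects the $j$ minimizing this count, and outputs $\beta_j(s)$; since $m_{i_0}[s]$ stabilizes while every other $m_i[s]\to\infty$, the output is eventually $\beta_{i_0}(s)$, giving $\Psi(\alpha)\rel{E_0}\beta_{i_0}$. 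You instead build a learner via a cyclic abandonment rule and then invoke Theorem~\ref{from E_0 to InfEx} to recover $E_0$-learnability. Both routes succeed for exactly the same reason; the paper's is marginally more direct, while yours makes the link to the learning paradigm explicit. (Your factor $\tfrac{1}{3}$ is harmless but unnecessary: any positive $r$ strictly below the minimum $\limsup$ already suffices.)
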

	\begin{proof}
		Since $E_0$ is computably reducible to $Z_0$ (see Figure~\ref{fig:benchmark}), every $E_0$-learnable family is also $Z_0$-learnable.
		
		Suppose that $\mathfrak{K} = (\mathcal{A}_i)_{i\in\omega}$ is a $Z_0$-learnable family. Let $\Gamma$ be an operator which induces a continuous reduction from $\mathrm{LD}(\mathfrak{K})$ to $Z_0$. For $i\leq n$, we fix $\beta_i$ such that $\Gamma$ maps all copies of $\mathcal{A}_i$ into $[\beta_i]_{Z_0}$. Notice that the reals $\beta_i$ are pairwise not $E_0$-equivalent.
		
		We fix a positive rational $q_0$ such that 
		\[
			q_0 < \min \{  \limsup \!_s\, dn(\beta_i, \beta_j; s)\,\colon i < j \leq n \}.
		\]
		
		There exist an oracle $X$ and a Turing operator $\Phi$ such that $\Gamma(\alpha) = \Phi^{X\oplus \alpha}$ for all $\alpha \in 2^{\omega}$. 
		
		\smallskip
		
		We define an $(X\oplus \bigoplus_{i\leq n} \beta_i)$-computable operator $\Psi$. Let $\alpha$ be a real.
	For $s\in\omega$, by $\ell[s]$ we denote the greatest number such that for every $x\leq \ell[s]$, the value $\Phi^{(X\oplus \alpha)\upharpoonright s}(x)[s]$ is defined.

	At a stage $s$, for each $i\leq n$, we compute the value
		\[
			m_i[s] := \mathrm{card}( \{ t\leq \ell[s] \,\colon dn(\Phi^{X\oplus\alpha}, \beta_i; t ) > q_0 \}  ).
		\]
		We find the least $j\leq n$ such that
		\[
			m_j[s] = \min \{ m_i[s]\,\colon i\leq n\},
		\]
		and set $\Psi(\alpha)(s) := \beta_j (s)$.
		This concludes the description of the operator $\Psi$. 
		
\smallskip		
		
		Suppose that a real $\alpha$ encodes a copy of a structure $\mathcal{A}_{i_0}$ for some $i_0 \leq n$. Then by Lemma~\ref{lem:Z_0}, we have:
		\[
			\lim \!_s \, dn(\Gamma(\alpha), \beta_{i_0}; s) = 0 \ \text{ and } \limsup \!_s\, dn(\Gamma(\alpha), \beta_i;s) > q_0
		\]
		for all $i \neq i_0$. 
		
		Choose a number $t_0$ such that for all $t\geq t_0$, we have $dn(\Phi^{X\oplus \alpha}, \beta_{i_0}; t) \leq q_0$. Fix a stage $s_0$ with $t_0 \leq \ell[s_0]$. Then for all $s\geq s_0$, we have $m_{i_0}[s] = m_{i_0}[s_0]$. 
		
		On the other hand, it is not hard to show that for every $i\neq i_0$, we have $\lim_s m_i[s] = \infty$. This implies that the real $\Psi(\alpha)$ is $E_0$-equivalent to $\beta_{i_0}$.
		
		We deduce that the operator $\Psi$ provides a continuous reduction from $\mathrm{LD}(\mathfrak{K})$ to $E_0$. Theorem~\ref{prop:Z_0_finite} is proved.
	\end{proof}

It is known that $E_3$ is continuously reducible to $Z_0$ (see Figure~\ref{fig:benchmark}). So, $E_3$ is $\mathrm{Learn}^{\omega}$ reducible to $Z_0$. The next question, which is left open, asks if the converse hold.

\begin{question}
Is there a countable $Z_0$-learnable family, which is not $E_{set}$-learnable?
\end{question}


\subsection{$E_{set}$-learning} A distinctive feature of our learning framework is that there are finite families of structures which are not learnable. This is the case, most notably, of the pair of linear orders $\{\omega, \zeta\}$, where $\zeta$ is the order type of the integers. Such a feature is in sharp contrast with classical paradigms, since, e.g.,  any finite collection of recursive functions
is $\Inf\Ex$-learnable. Yet, we have observed that all Borel equivalence relations so far considered are $\mathrm{Learn}^{<\omega}$-equivalent to $E_0$. So, a question comes naturally: how high in the Borel hierarchy one needs to climb to reach an equivalence relation $E$ which is able to learn a nonlearnable finite family? The next proposition shows that $E_{set}$ suffices.

\begin{prop}
	The family $\{ \omega,\zeta\}$ is $E_{set}$-learnable. 
\end{prop}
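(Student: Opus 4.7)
The plan is to construct a continuous $\Gamma : 2^\omega \to 2^\omega$ whose output has a canonical (copy-independent) set of columns: one set $S_\omega$ for every copy of $\omega$, a different set $S_\zeta$ for every copy of $\zeta$. The structural fact to exploit is that $\omega$ has a unique minimum element while $\zeta$ has none. I will arrange that $S_\omega = \{0^\infty\} \cup \{0^k 1^\infty : k \in \omega\}$ and $S_\zeta = \{0^k 1^\infty : k \in \omega\}$, so the two sets differ exactly in whether $0^\infty$ appears as a column.

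I index the columns by pairs $\langle a, t_0\rangle$ with $a, t_0 \in \omega$, where $a$ ranges over potential elements of $\mathcal{L}$ and $t_0$ plays the role of a ``shift''. Given a real $\alpha$ coding the atomic diagram of a linear order $\mathcal{L}$ with domain $\omega$, set
\[
\Gamma(\alpha)(\langle a, t_0, t\rangle) := 1 \iff a \leq t+t_0 \ \text{and}\ (\exists b \leq t+t_0)\ \mathcal{L} \models b < a,
\]
which is decided inside the finite substructure $\mathcal{L} \restriction_{t+t_0}$. Each output bit depends only on a finite initial segment of $\alpha$, so $\Gamma$ is continuous. Writing $c_{\langle a, t_0\rangle}$ for the corresponding column, a direct case analysis gives $c_{\langle a, t_0\rangle} = 0^\infty$ when $a$ has no predecessor in $\mathcal{L}$, and $c_{\langle a, t_0\rangle} = 0^{\max(n_a - t_0,\,0)} 1^\infty$ otherwise, where $n_a := \max(a, \mu_a)$ and $\mu_a$ is the least $b \in \omega$ with $\mathcal{L} \models b < a$.

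If $\mathcal{L}$ is a copy of $\omega$, then its unique minimum $\ell$ gives $c_{\langle \ell, t_0\rangle} = 0^\infty$ for every $t_0$; and for each non-minimum $a$, letting $t_0$ vary produces exactly $\{0^k 1^\infty : 0 \leq k \leq n_a\}$. Since $n_a \geq a$ is unbounded as $a$ ranges over the infinitely many non-minimum elements, the union over $a$ is all of $\{0^k 1^\infty : k \geq 0\}$, and the column-set is exactly $S_\omega$, independently of the copy chosen. If instead $\mathcal{L}$ is a copy of $\zeta$, then every element has a predecessor, so no column equals $0^\infty$; the same coverage argument yields every $0^k 1^\infty$, so the column-set is $S_\zeta$, again copy-invariant.

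Since $S_\omega \neq S_\zeta$ (they differ precisely in the real $0^\infty$) and each set is copy-invariant, $\Gamma$ continuously reduces $\mathrm{LD}(\{\omega, \zeta\})_{/\cong}$ to $E_{set}$, witnessing $E_{set}$-learnability. The subtle point requiring care is copy-invariance: one must verify that the shifts $t_0$, combined with the unbounded sequence $(n_a)$, canonically realize all required initial blocks of zeros regardless of the particular enumeration of the domain used to code the structure. This is exactly where the bound $n_a \geq a$ does the essential work.
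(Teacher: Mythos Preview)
Your proof is correct and follows essentially the same approach as the paper: both constructions target the column sets $S_\omega = \{0^\infty\}\cup\{0^k1^\infty : k\in\omega\}$ and $S_\zeta = \{0^k1^\infty : k\in\omega\}$ by exploiting the presence or absence of a least element, and both need a device to guarantee that \emph{every} $0^k1^\infty$ appears regardless of the copy. The only difference is cosmetic: the paper hard-codes dedicated columns $0^i1^\infty$ (its even-indexed columns) to force copy-invariance, whereas you achieve the same effect via the shift parameter $t_0$ together with the bound $n_a\geq a$.
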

\begin{proof}	
	Given a real $\alpha$, which encodes a graph with infinite domain $A\subseteq\omega$, one can effectively recover a list $(a_i)_{i\in\omega}$, which enumerates the set $A$ without repetitions. In addition, the recovery procedure is uniform in $\alpha$.
	
	We define a Turing operator $\Psi$. 	For a real $\alpha$, the output $\Psi(\alpha)$ is constructed as follows. For all $i$ and $s$, we put
	\[
		\Psi(\alpha)(\langle 2i,s\rangle) := \begin{cases}
			0, & \text{if } s < i,\\
			1, & \text{if } s \geq i.
		\end{cases}
	\]
	
	Let $B_s$ be the finite linear order, which is encoded by the finite string $\alpha\upharpoonright s$ (note that $B_s$ can be empty). For $i\in\omega$, consider the element $a_i$ (from the list discussed above). If $a_i\not\in B_s$ or $a_i$ is the $\leq_{B_s}$-least element inside $B_s$, then we set $\Psi(\alpha)(\langle 2i+1,s\rangle) := 0$. Otherwise, set $\Psi(\alpha)(\langle 2i+1,s\rangle) := 1$.
	
	Suppose that a real $\alpha$ encodes a copy of $\mathcal{A} \in \{ \omega, \zeta\}$. If $\mathcal{A}$ is isomorphic to $\zeta$, then it is clear that
	\[
		\{ (\Psi(\alpha))^{[m]}\,\colon m\in\omega \} = \{  0^i 1^{\infty} \,\colon i\in\omega\}.
	\]
	If $\mathcal{A} \cong \omega$, then there is an element $a_{i_0}$, which is $\leq_{\mathcal{A}}$-least. This implies
	\[
		\{ (\Psi(\alpha))^{[m]}\,\colon m\in\omega \} = \{  0^i 1^{\infty} \,\colon i\in\omega\} \cup \{ 0^{\infty}\}.
	\]
	Therefore, we deduce that the family $\{ \omega, \zeta\}$ is $E_{set}$-learnable.
\end{proof}


\section{Conclusions}
The investigation conducted in this paper has been fueled by the discovery of a connection between algorithmic learning theory and descriptive set theory. Namely, we proved that the  task of learning a given family of algebraic structures (up to isomorphism) is   equivalent to the task of constructing a suitable continuous reduction to $E_0$. Then, we carefully analyzed the learning power of a number of well-known benchmark Borel equivalence relations. Our results are collected in Figures~\ref{finite-learn} and \ref{countable-learn}.

We wish to conclude by  mentioning three  research directions that originate from the above results and which look promising:

\begin{enumerate}
\item First, it seems  natural to discuss the learning power of other Borel equivalence relations. There is a wide choice---even if one restricts to a small fragment of the Borel hierarchy, such as the $\mathbf{\Pi}^0_3$ equivalence relations (see \cite{gao2008invariant});  
\item Secondly, it would be nice to obtain  learning theoretic or purely syntactic characterizations for $E_{set}$- and $Z_0$-learnabilities of countable families, along the lines of Theorems \ref{thm:E_3-characterization} and \ref{theorem:characterization learning}.
\item Thirdly, observe that our original framework was inherently limited to the countable case, since the learner had to provide a conjecture (i.e., a finite object) for each isomorphism type of the observed family. But now the concept of $E$-learnability can be naturally applied to families of continuum size. This offers a new research opportunity, probably worth considering.
\end{enumerate}

\medskip

\begin{figure}
	\includegraphics[scale=1]{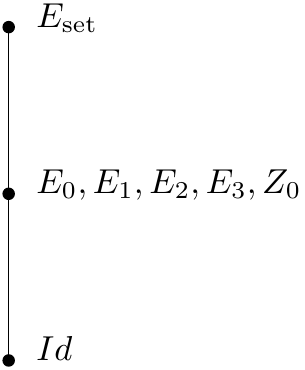}
	\caption{Reductions up to ${\mathrm{Learn}}^{<\omega}$-reducibility.}
	\label{finite-learn}
\end{figure}

\medskip

\begin{figure}
	\includegraphics[scale=1]{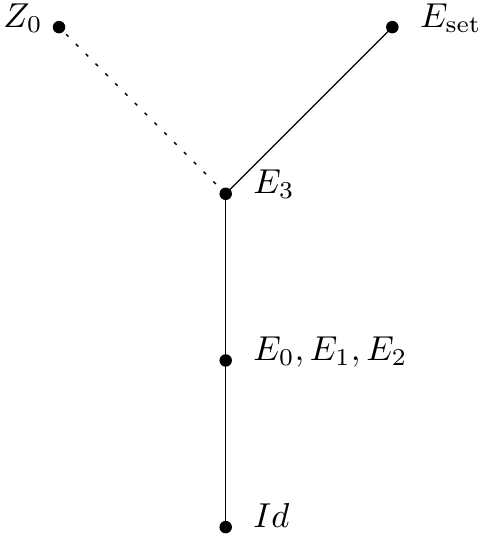}
	\caption{Reductions up to ${\mathrm{Learn}}^\omega$-reducibility.}
	\label{countable-learn}
\end{figure}


\vspace{.15in}


\end{document}